\theoremstyle{plain}
\newtheorem{theorem}{Theorem}
\newtheorem{corollary}{Corollary}
\newtheorem{lemma}{Lemma}
\newcommand{\lo}{\overline{\bm{\lambda}\vphantom{\bm{\alpha}}}}
\newcommand{\ld}{\overline{\bm{\lambda}^2}}
\newcommand{\lt}{\overline{\bm{\lambda}^3}}
\newcommand{\mo}{\overline{\bm{\mu}\vphantom{\bm{\lambda}}}}
\newcommand{\md}{\overline{\bm{\mu}^2}}
\newcommand{\mt}{\overline{\bm{\mu}^3}}
\newcommand{\ldc}{\overline{\bm{\lambda}_c^2}}
\newcommand{\ltc}{\overline{\bm{\lambda}_c^3}}
\newcommand{\lqc}{\overline{\bm{\lambda}_c^4}}
\newcommand{\mdc}{\overline{\bm{\mu}_c^2}}
\newcommand{\mtc}{\overline{\bm{\mu}_c^3}}
\newcommand{\mqc}{\overline{\bm{\mu}_c^4}}
\newcommand{\bbGamma}{{\mathpalette\makebbGamma\relax}}
\newcommand{\makebbGamma}[2]{%
  \raisebox{\depth}{\scalebox{1}[-1]{$\mathsurround=0pt#1\mathbb{L}$}}}
\title{Exact first moments of the RV coefficient by invariant orthogonal integration}
\author{Fran\c{c}ois Bavaud\\
	University of Lausanne, Switzerland\\
		\texttt{fbavaud@unil.ch}  \\
}
\begin{document}
\maketitle

\begin{abstract}
The RV coefficient measures the similarity between two multivariate configurations, and its significance testing has attracted various proposals in the last decades.  We present a new approach, the invariant orthogonal integration, permitting to obtain the exact first four moments of the RV coefficient under the null hypothesis. It consists in averaging along the Haar measure the respective orientations of the two configurations, and can be applied to any multivariate setting endowed with Euclidean distances between the observations. Our proposal also covers the weighted setting of observations of unequal importance, where the exchangeability assumption, justifying the usual permutation tests, breaks down.

The proposed RV moments express as simple functions of the kernel eigenvalues occurring in the weighted multidimensional scaling of the two configurations.  The expressions for the third and fourth moments seem original. The first three moments can be obtained by elementary means, but  computing the fourth moment requires a more sophisticated apparatus, the Weingarten calculus for orthogonal groups. The central role of standard kernels  and their spectral moments is emphasized. 
\end{abstract}

\keywords{RV coefficient \and weighted multidimensional scaling \and  spectral moments \and invariant orthogonal integration \and Weingarten calculus}

\section{Introduction}
The RV coefficient is a well-known  measure of similarity between two datasets, each consisting of multivariate profiles measured on the same $n$ observations or objects. This contribution proposes a new approach, the {\em  invariant orthogonal integration}, permitting to obtain the exact first four moments of the RV coefficient under the null hypothesis of absence of relation between the two datasets. The main results,  theorem \ref{CVtheo} and corollary \ref{RVcol}, are exposed in 
section \ref{secmarsit}. The approach is fully nonparametric, and allows the handling of {\em weighted objets}, typically made of  {\em aggregates} such as  regions, documents or species, which abound in multivariate analysis. 

In the present distance-based data-analytic approach, data sets are constituted by {\em weighted configurations} specified by the object weights together with their pair dissimilarities, assumed to be squared Euclidean. Factorial coordinates, reproducing the dissimilarities, and permitting a maximum compression of the configuration inertia, obtain by {\em weighted multidimensional scaling}. The latter, seldom exposed in the literature and hence briefly recalled in section \ref{secWmssk}, is a direct generalization of classical scaling. The central step is provided by the spectral decomposition of the matrix of  weighted centered scalar products or {\em kernel}. It permits to decompose the spectral eigenspace into a trivial one-dimensional part, determined by the object weights, common to both configurations, and a non-trivial  part of  dimension $n-1$, orthogonal to the square root of the weights. The weighted RV coefficient obtains as the normalized scalar product between the kernels of the two configurations (section \ref{presRVco}), and turns out to be equivalent to its original definition expressed by cross-covariances \citep{escoufier1973traitement, robert1976unifying}. 

After recalling the above preliminaries, somewhat lengthy but necessary,  the heart of this contribution can be uncovered: invariant orthogonal integration consists in computing the expected null moments of the RV coefficient by averaging,   along the invariant Haar orthogonal measure in the non-trivial eigenspace, 
the orientations of one configuration  with respect to the other, by orthogonal transformation of, say, the first eigenspace (section \ref{invorthin}). It constitutes a distinct alternative, with different outcomes, to the traditional permutation approach, whose exchangeability assumption breaks down for weighted objects: typically, the profile dispersion is expected to be larger for lighter objects  \citep{bavaud2013testing} and  the $n$ object scores cannot follow the same distribution. The present approach also 
yields a novel significance test  for the RV coefficient (equation \ref{sigTest}), taking into account skewness and kurtosis corrections to the usual normal approximation. 
 
Computing the moments of the RV coefficient requires to evaluate the  {\em orthogonal coefficients} (\ref{orthoCoeff}) constituted by Haar expectations of orthogonal monomials. Low-order moments can be computed, with increasing difficulty, by elementary means (section \ref{Computing low-order}), but the fourth-order moment requires a more systematic approach (section \ref{q4}), provided by the {\em Weingarten calculus} developed by workers in random matrix theory and free probability.  Both  procedures yield the same results for low-order moments (section \ref{q3rev}), which is both expected and reassuring. 

The first  RV moment (\ref{4MainCoro1}) coincides with all known proposals. The second centered RV moment (\ref{4MainCoro2}) is simpler than its permutation analog, and underlines the {\em effective dimensionality} of a configuration. The third centered RV moment (\ref{4MainCoroo3}) is particularly enlightening: the RV skewness is simply proportional to the product of the {\em spectral skewness} of both configurations, thus elucidating the often noticed  positive skewness of the RV coefficient. The expression for the fourth centered RV moment  (\ref{4MainTheo4}), (\ref{4MainCoro4}) is also simple to express and to compute, yet more difficult to interpret.

\section{Euclidean configurations in a weighted setting: a concise remainder}
\label{Ecwsqr}
\subsection{Weighted multidimensional scaling and standard kernels}
\label{secWmssk}
Consider  $n$ objects endowed with positive {\em weights} $f_i>0$ with $\sum_{i=1}^n f_i=1$, as well with pairwise {\em dissimilarities} $\mathbf{D}=(D_{ij})$  between pairs of objects. The $n\times n$ matrix $\mathbf{D}$ is assumed to be 
 {\em squared Euclidean}, that is of the  form $D_{ij}=\|\mathbf{x}_i-\mathbf{x}_j\|^2$ for $\mathbf{x}_i,\mathbf{x}_j\in \mathbb{R}^r$, with $r\le n-1$. The pair $(\mathbf{f},\mathbf{D})$ constitutes a  {\em weighted configuration}, with $f_i=1/n$ for unweighted configurations.

Weighted multidimensional scaling aims at determining object coordinates $\mathbf{X}=(x_{i\alpha})\in \mathbb{R}^{n\times r}$  reproducing the dissimilarities $\mathbf{D}$ while expressing a maximum amount of dispersion or {\em inertia} $\Delta$ (\ref{inertia}) in low dimensions. It is performed by the following weighted generalization of the well-known Torgerson–Gower scaling procedure \citep[see e.g.][]{borg2005modern}: first, define $\bm{\Pi}=\mbox{diag}(\mathbf{f})$, as well as the weighted centering matrix $\mathbf{H}=\mathbf{I}_n-\bm{1}_n\mathbf{f}^\top$, which obeys $\mathbf{H}^2=\mathbf{H}$. However, $\mathbf{H}^\top\neq \mathbf{H}$, unless $\mathbf{f}$ is uniform. 

Second, compute the  matrix $\mathbf{B}$ of {\em scalar products} by double centering:  $\mathbf{B}=-\frac12 \mathbf{H}\, \mathbf{D}\, \mathbf{H}^\top$. Third, define the 
$n\times n$ {\em kernel} $\mathbf{K}$ as the matrix of {\em weighted scalar products} : 
\begin{displaymath}
\mathbf{K}=\sqrt{\bm{\Pi}}\,\mathbf{B}\sqrt{\bm{\Pi}}\qquad,\qquad\mbox{that is}\qquad K_{ij}=\sqrt{f_if_j}B_{ij}\enspace. 
\end{displaymath}
Fourth,  perform the spectral decomposition with $\hat{\mathbf{U}}$ orthogonal and $\hat{\bm{\Lambda}}$ diagonal
\begin{equation}
\label{specdec}
\mathbf{K}=\hat{\mathbf{U}}\hat{\bm{\Lambda}}\hat{\mathbf{U}}^\top \qquad\qquad \hat{\mathbf{U}}\hat{\mathbf{U}}^\top=\hat{\mathbf{U}}^\top \hat{\mathbf{U}}=\mathbf{I}_n
\qquad\qquad \hat{\bm{\Lambda}}=\mbox{diag}(\bm{\lambda})\enspace. 
\end{equation}
By construction, $\mathbf{K}$  possesses one trivial eigenvalue $\lambda_0=0$ associated to the eigenvector $\sqrt{\mathbf{f}}$ and $n-1$ non-negative eigenvalues decreasingly ordered as $\lambda_1\, \ge\, \lambda_2\, \ge\,\ldots\,\ge\, \lambda_{n-1}\ge0$, among which $r=\mbox{rg}(\mathbf{K})$ are strictly positive.

From now on {\em the trivial eigenspace will be  discarded}: set $\hat{\mathbf{U}}=(\sqrt{\mathbf{f}}|\mathbf{U})$, where $\mathbf{U}\in \mathbb{R}^{n\times (n-1)}$ and $\bm{\Lambda}=\mbox{diag}(\lambda_1,\ldots,\lambda_{n-1})$. Direct substitution from (\ref{specdec}) yields
\begin{equation}
\label{Knontrivial}
\mathbf{K}= \mathbf{U} \bm{\Lambda} \mathbf{U}^\top \quad\qquad\mathbf{U}\mathbf{U}^\top=\mathbf{I}_n-\sqrt{\mathbf{f}}\sqrt{\mathbf{f}}^\top\qquad\quad
\mathbf{U}^\top\mathbf{U}=\mathbf{I}_{n-1}\qquad\quad \mathbf{U}^\top\sqrt{\mathbf{f}}=\mathbf{0}_n\enspace. 
\end{equation}

Finally, the searched for coordinates obtain as $\mathbf{X}=\bm{\Pi}^{-\frac12} \mathbf{U} \bm{\Lambda}^\frac12$, that is $x_{i\alpha}=u_{i\alpha}\sqrt{\lambda_\alpha}/\sqrt{f_i}$. One verifies easily that
\begin{equation}
\label{inertia}
D_{ij}=\sum_{\alpha=1}^{n-1} (x_{i\alpha}-x_{j\alpha})^2\qquad\qquad\qquad \Delta=\frac12\sum_{i,j=1}^n f_i f_j D_{ij}=\mbox{Tr}(\mathbf{K})=\sum_{\alpha=1}^{n-1} \lambda_\alpha\enspace. 
\end{equation}
The kernels considered here are positive semi-definite and obey in addition $\mathbf{K}\sqrt{\mathbf{f}}=\mathbf{0}_n$. We call them {\em standard kernels}. They can be related to the weighted version of centered kernels of Machine Learning \citep[see e.g.][]{cortes2012algorithms}. To each weighted configuration $(\mathbf{f},\mathbf{D})$ corresponds a unique standard kernel $\mathbf{K}$, and conversely. 

The matrix $\mathbf{K}_0=\mathbf{I}_n-\sqrt{\mathbf{f}}\sqrt{\mathbf{f}}^\top$ appearing in (\ref{Knontrivial}) constitutes a standard kernel, referred to as the {\em neutral kernel} in view of property $\mathbf{K}_0\mathbf{K}=  \mathbf{K}_0\mathbf{K}=\mathbf{K}$ for any standard kernel $\mathbf{K}$. 
The corresponding dissimilarities  are the {\em  weighted discrete distances} 
\begin{displaymath}
D_{ij}^0= \begin{cases}
  \frac{1}{f_i}+\frac{1}{f_j}  &   \text{ for $i\neq j$} \\
 0 &  \text{ otherwise.}
\end{cases}
\end{displaymath}

\subsection{The RV coefficient}
\label{presRVco}
Consider  two weighted configurations $(\mathbf{f},\mathbf{D}_X)$ and  $(\mathbf{f},\mathbf{D}_Y)$ endowed with the {\em same weights} $\mathbf{f}$, or equivalently two standard kernels $\mathbf{K}_X$ and $\mathbf{K}_Y$ (Figure~\ref{two_configs}). Their similarity can be measured by the {\em weighted RV coefficient} defined as
\begin{equation}
\label{RVcoeffi}
{\tt RV}={\tt RV}_{XY}=\frac{\mbox{Tr}(\mathbf{K}_X\; \mathbf{K}_Y)}{\sqrt{\mbox{Tr}(\mathbf{K}^2_X)\mbox{Tr}(\mathbf{K}^2_Y)}}
\end{equation}
which constitutes the cosine similarity between the vectorized matrices $\mathbf{K}_X$ and $\mathbf{K}_Y$. As a consequence, ${\tt RV}_{XY}\ge0$ (since $\mathbf{K}_X$  and $\mathbf{K}_Y$ are positive semi-definite), ${\tt RV}_{XY}\le1$ (by the Cauchy-Schwarz inequality) and ${\tt RV}_{XX}=1$.

\begin{figure}
\begin{center}
\includegraphics[width=0.3\textwidth]{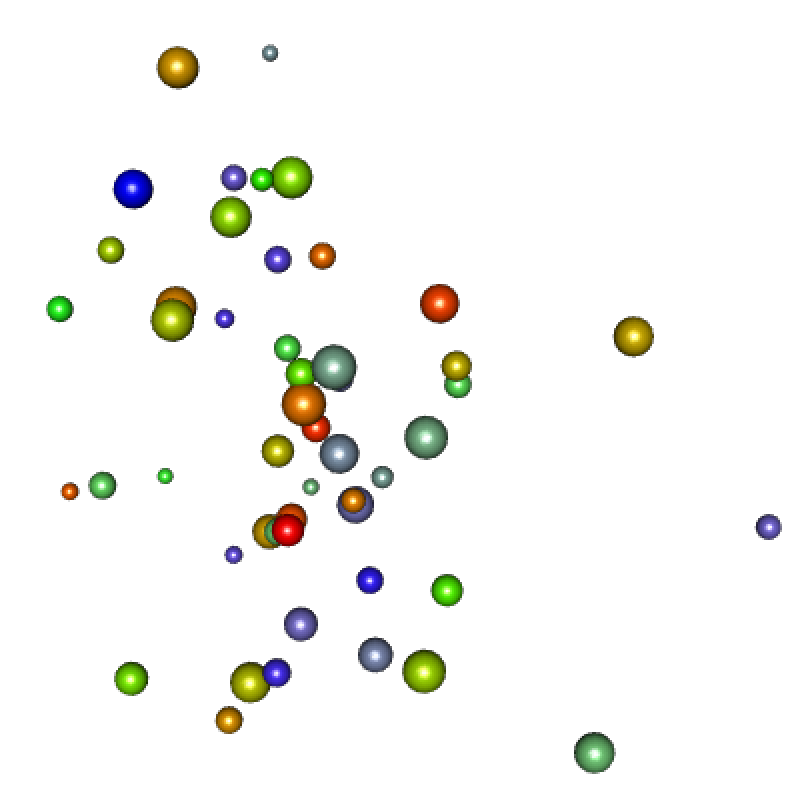}  
\hspace{2.5cm}
\includegraphics[width=0.26\textwidth]{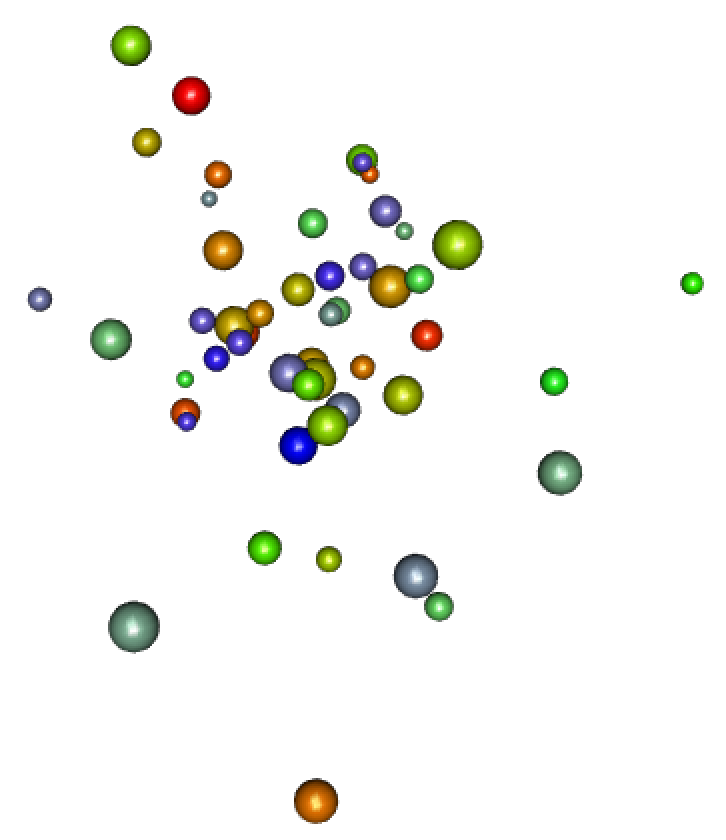}
\caption{Two weighted configurations  $(\mathbf{f},\mathbf{D}_X)$ (left) and  $(\mathbf{f},\mathbf{D}_Y)$ (right) embedded in $\mathbb{R}^{n-1}$}
\label{two_configs}
\end{center}
\end{figure}

Quantity (\ref{RVcoeffi}) is a straightforward weighted generalization of the RV coefficient  \citep{escoufier1973traitement, robert1976unifying}: 
consider multivariate features $\mathbf{X}\in \mathbb{R}^{n\times p}$ and $\mathbf{Y}\in \mathbb{R}^{n\times q}$, directly entering into the 
definition of $\mathbf{D}_X$ and $\mathbf{D}_Y$ as coordinates, or equivalently as   $\mathbf{K}_X=\sqrt{\bm{\Pi}}\mathbf{X}_c\mathbf{X}_c^\top\sqrt{\bm{\Pi}}$ and $\mathbf{K}_Y=\sqrt{\bm{\Pi}}\mathbf{Y}_c\mathbf{Y}_c^\top\sqrt{\bm{\Pi}}$, where $\mathbf{X}_c=\mathbf{H}\mathbf{X}$ and $\mathbf{Y}_c=\mathbf{H}\mathbf{Y}$ are the centered scores.

The weighted covariances are
$\bm{\Sigma}_{XX}=\mathbf{X}_c^\top \bm{\Pi}\mathbf{X}_c$ and $\bm{\Sigma}_{YY}=\mathbf{Y}_c^\top \bm{\Pi}\mathbf{Y}_c$. The {\em cross-covariances} are  $\bm{\Sigma}_{XY}=\mathbf{X}_c^\top \bm{\Pi}\mathbf{Y}_c$ and $\bm{\Sigma}_{YX}=\mathbf{Y}_c^\top \bm{\Pi}\mathbf{X}_c=\bm{\Sigma}_{XY}^\top$. The  original RV coefficient is defined in the feature space as 
\begin{equation}
\label{RVcoeffiCOV}
{\tt RV}_{XY}=\frac{\mbox{Tr}(\bm{\Sigma}_{XY}\; \bm{\Sigma}_{YX})}{\sqrt{\mbox{Tr}(\bm{\Sigma}^2_{XX})\mbox{Tr}(\bm{\Sigma}^2_{YY})}}\enspace. 
\end{equation}
Proving  the identity of  (\ref{RVcoeffi}) and (\ref{RVcoeffiCOV}) is easy.

\section{Computing the moments of the RV coefficient by invariant orthogonal integration}
\label{semoninsk}

\subsection{Main result and significance testing} 
\label{secmarsit}

$\mbox{}$ 

Define the CV coefficient by the quantity ${\tt CV}=\mbox{Tr}(\mathbf{K}_X \mathbf{K}_Y)$.

\label{mainres}
\begin{theorem}[Main result]
\label{CVtheo}
Under invariant orthogonal integration (section \ref{invorthin}), the expectation of the CV coefficient and its centered moments of order 2, 3 and 4 are
\begin{align}
\label{4MainTheo1}
 \mathbb{E}({\tt CV})=  & (n-1)\, \lo\, \mo \\
\label{4MainTheo2}
\mathbb{E}({\tt CV}_c^2)=  &   \frac{2(n-1)^2}{(n-2)(n+1)}\, \ldc\;  \mdc \\
\label{4MainTheo3}
\mathbb{E}({\tt CV}_c^3)=  &  \frac{8(n-1)^3}{(n-3)(n-2)(n+1)(n+3)} \, \ltc\;  \mtc \\
\label{4MainTheo4}
\mathbb{E}({\tt CV}_c^4)=  &  {  \frac{12\, (n-1)^3}{(n-4)(n-3)(n-2)n(n+1)(n+3)(n+5)}  }\, \Bigl\{  4 \,  (n^2-n+2)\, \lqc\: \mqc     +(n^4 \\ 
 +n^3  -15 n^2& -13 n+98)\, \ldc\: \ldc\:  \mdc\:  \mdc   -   4\, (2 n^2-n-7)\, (\lqc\:  \mdc\: \mdc+\ldc\: \ldc\:  \mqc)\Bigr\}\notag\enspace . 
\end{align}
\end{theorem}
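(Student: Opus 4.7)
The plan is to reduce all moments of ${\tt CV}$ to Haar averages of monomials in a single orthogonal matrix of size $n-1$. Writing the spectral decompositions of (\ref{Knontrivial}) as $\mathbf{K}_X=\mathbf{U}_X\bm{\Lambda}\mathbf{U}_X^\top$ and $\mathbf{K}_Y=\mathbf{U}_Y\mathbf{M}\mathbf{U}_Y^\top$ with $\bm{\Lambda}=\mbox{diag}(\bm{\lambda})$ and $\mathbf{M}=\mbox{diag}(\bm{\mu})$, the relations $\mathbf{U}_X^\top\mathbf{U}_X=\mathbf{U}_Y^\top\mathbf{U}_Y=\mathbf{I}_{n-1}$ together with $\mathbf{U}_X^\top\sqrt{\mathbf{f}}=\mathbf{U}_Y^\top\sqrt{\mathbf{f}}=\mathbf{0}$ force $\mathbf{O}:=\mathbf{U}_X^\top\mathbf{U}_Y$ to lie in $O(n-1)$. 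Invariant orthogonal integration declares $\mathbf{O}$ to be Haar-distributed on $O(n-1)$, and cyclicity of the trace then yields
\begin{equation*}
{\tt CV}\;=\;\mbox{Tr}(\bm{\Lambda}\,\mathbf{O}\,\mathbf{M}\,\mathbf{O}^\top)\;=\;\sum_{\alpha,\beta=1}^{n-1}\lambda_\alpha\,\mu_\beta\,O_{\alpha\beta}^2\enspace.
\end{equation*}
Raising to the $k$th power, $\mathbb{E}({\tt CV}^k)$ becomes a $k$-fold sum of products $\lambda_{\alpha_1}\!\cdots\lambda_{\alpha_k}\mu_{\beta_1}\!\cdots\mu_{\beta_k}$ weighted by the orthogonal coefficients $\mathbb{E}(O_{\alpha_1\beta_1}^2\cdots O_{\alpha_k\beta_k}^2)$ introduced at (\ref{orthoCoeff}).

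Next, I exploit the fact that each such orthogonal coefficient depends only on the pair $(\pi_\alpha,\pi_\beta)$ of set partitions of $\{1,\dots,k\}$ induced by coincidences among the row and the column indices. Grouping terms by $(\pi_\alpha,\pi_\beta)$ converts the double eigenvalue sum into a finite linear combination of products of ordinary power sums $\sum_\alpha\lambda_\alpha^p=(n-1)\,\overline{\bm{\lambda}^p}$ and their $\bm{\mu}$-analogues. For $k=1,2,3$ the required orthogonal coefficients are obtainable by elementary means: row/column normalization $\sum_\gamma O_{\alpha\gamma}^2=1$ together with Haar invariance under permutations and sign flips pin down each coefficient after at most a handful of linear relations, as carried out in section \ref{Computing low-order}. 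For $k=4$ the proliferation of index patterns and the need to invert an overlap matrix on pair partitions force recourse to the Weingarten calculus of the orthogonal group (section \ref{q4}), which expresses each Haar moment as an explicit rational function of $n-1$.

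The final step converts raw into centered moments. The $k=1$ computation, using $\mathbb{E}(O_{\alpha\beta}^2)=1/(n-1)$, establishes $\mathbb{E}({\tt CV})=(n-1)\lo\,\mo$; the higher centered moments are then obtained by expanding $\mathbb{E}({\tt CV}_c^k)=\mathbb{E}\bigl(({\tt CV}-(n-1)\lo\,\mo)^k\bigr)$ and rewriting $\ld,\lt,\lq$ in terms of $\ldc,\ltc,\lqc$ and $\lo$ (and similarly for $\bm{\mu}$). Substantial cancellations must then occur: in (\ref{4MainTheo2}) and (\ref{4MainTheo3}), every term still carrying $\lo$ or $\mo$ has to vanish, leaving only $\ldc\,\mdc$ and $\ltc\,\mtc$ with their specific rational prefactors in $n$. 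The main obstacle is unmistakably the fourth moment: the $O(n-1)$ Weingarten weights for four index pairs generate many contributions that must be painstakingly regrouped so that all uncentered pieces cancel and only the four monomials $\lqc\,\mqc$, $\ldc\,\ldc\,\mdc\,\mdc$, $\lqc\,\mdc\,\mdc$ and $\ldc\,\ldc\,\mqc$ survive, with exactly the polynomial coefficients in $n$ displayed in (\ref{4MainTheo4}).
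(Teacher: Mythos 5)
Your proposal is correct and follows essentially the same route as the paper: your matrix $\mathbf{O}=\mathbf{U}_X^\top\mathbf{U}_Y$ is orthogonal precisely by the paper's joint-orthogonality property $\mathbf{V}^\top\mathbf{U}\mathbf{U}^\top\mathbf{V}=\mathbf{I}_{n-1}$, Haar-averaging $\mathbf{O}$ is equivalent by left-invariance to the paper's rotation $\mathbf{U}\mapsto\mathbf{U}\mathbf{T}$ with $\mathbf{T}$ Haar on $\mathbb{O}_{n-1}$, and the subsequent reduction to orthogonal coefficients grouped by index-coincidence patterns, solved by elementary invariance relations for $q\le 3$ and by Weingarten calculus for $q=4$, is exactly sections \ref{Computing low-order}--\ref{q4}. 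The only point where the paper is materially cleaner is centering: instead of expanding $({\tt CV}-\mathbb{E}({\tt CV}))^4$ and chasing the cancellations you defer to, Lemma \ref{centered} exploits double stochasticity of $(P_{\alpha\beta})$ to write ${\tt CV}_c=\sum_{\alpha\beta}\lambda^c_\alpha\mu^c_\beta P_{\alpha\beta}$, so the moment formula applies verbatim to centered eigenvalues and every coset type containing a part of size one drops out automatically because $\loc=\moc=0$.
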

where ${\tt CV}_c={\tt CV}-\mathbb{E}({\tt CV})$. Spectral moments and  centered spectral moments read 
\begin{equation}
\label{spmocespmo}
\overline{\bm{\lambda}^q}=\frac{1}{n-1}\sum_{\alpha=1}^n\lambda_\alpha^q=\frac{1}{n-1}\sum_{\alpha=1}^n\mbox{Tr}(\mathbf{K}_X^q)=\mbox{tr}(\mathbf{K}_X^q)
\qquad\qquad
\overline{\bm{\lambda}^q_c}=\frac{1}{n-1}\sum_{\alpha=1}^n(\lambda^c_\alpha)^q 
\end{equation}
where  $\lambda^c_\alpha=\lambda_\alpha-\overline{\bm{\lambda}}$ and  $\mbox{tr}(\mathbf{A})= \mbox{Tr}(\mathbf{A})/(n-1)$ denotes the {\em normalized trace}. Centered spectral moments can be transformed into normalized traces, and conversely. For instance, $\ltc=\mbox{tr}(\mathbf{K}_X^3)-3\, \mbox{tr}(\mathbf{K}_X^2)\, \mbox{tr}(\mathbf{K}_X)+2\, \mbox{tr}^3(\mathbf{K}_X)$.

Identity ${\tt RV}={\tt CV}/\sqrt{\mbox{Tr}(\mathbf{K}^2_X)\mbox{Tr}(\mathbf{K}^2_Y)}= {\tt CV}/[(n-1) \sqrt{\ld\;  \md}\, ]$ directly yields:  

\begin{corollary}[First cumulants of the RV coefficient]
\label{RVcol}
Under invariant orthogonal integration, the first cumulants of the RV coefficient, that is its expectation, variance, skewness and excess kurtosis are, in order, 
\begin{align}
 \label{4MainCoro1}
\mathbb{E}({\tt RV})=  & \frac{1}{n-1}\frac{\mbox{\rm Tr}(\mathbf{K}_X)\, \mbox{\rm Tr}(\mathbf{K}_Y)}{\sqrt{\mbox{\rm Tr}(\mathbf{K}_X^2)\, \mbox{\rm Tr}(\mathbf{K}_Y^2)}}
 =\frac{\overline{\bm{\lambda}}\overline{\bm{\mu}\: \vphantom{\lambda}}}{\sqrt{\overline{\bm{\lambda}^2}\: \overline{\bm{\mu}^2}}}=\frac{\sqrt{\nu({\bm \lambda})\nu({\bm \mu})}}{n-1}
 \\
\label{4MainCoro2}
\mbox{$\mathbb{V}$\rm ar}({\tt RV}) = & \mathbb{E}({\tt RV}_c^2)=     \frac{2(n-1-\nu({\bm \lambda}))(n-1-\nu({\bm \mu}))}{(n-2)(n-1)^2(n+1)} \\
\label{4MainCoroo3}
 \mbox{$\mathbb{A}$}({\tt RV})= & \frac{\mathbb{E}({\tt RV}_c^3)}{\mathbb{E}^{\frac32}({\tt RV}_c^2)}=   \frac{\sqrt{8(n-2)(n+1)}}{(n-3)(n+3)}\:   a({\bm \lambda})\:  a({\bm \mu}) \\
\label{4MainCoro4}
 \bbGamma({\tt RV})= & \frac{\mathbb{E}({\tt RV}_c^4)}{\mathbb{E}^2({\tt RV}^2_c)}-3 = \frac{\mathbb{E}({\tt CV}_c^4)}{\mathbb{E}^2({\tt CV}_c^2)}-3
 \qquad\quad\mbox{\rm (see (\ref{4MainTheo4}) and (\ref{4MainTheo2}))}
\end{align}
\end{corollary}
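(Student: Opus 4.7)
The plan is to observe that the denominator $c := (n-1)\sqrt{\ld\, \md}$ in the identity ${\tt RV} = {\tt CV}/c$ depends only on the spectra of $\mathbf{K}_X$ and $\mathbf{K}_Y$, and is therefore invariant under the Haar averaging of section \ref{invorthin}. Consequently $c$ is deterministic under that measure, so ${\tt RV}_c = {\tt CV}_c/c$, and every moment of ${\tt RV}$ is obtained from the corresponding moment of ${\tt CV}$ in Theorem \ref{CVtheo} by dividing by the appropriate power of $c$.

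For the expectation, dividing (\ref{4MainTheo1}) by $c$ gives $\mathbb{E}({\tt RV}) = \lo\, \mo/\sqrt{\ld\, \md}$. The identification with $\sqrt{\nu({\bm\lambda})\nu({\bm\mu})}/(n-1)$ then follows from the definition $\nu({\bm\lambda}) := (\mbox{Tr}\, \mathbf{K}_X)^2/\mbox{Tr}(\mathbf{K}_X^2) = (n-1)\lo^2/\ld$, which yields $\lo/\sqrt{\ld} = \sqrt{\nu({\bm\lambda})/(n-1)}$, and analogously for $\mu$. For the variance, I would divide (\ref{4MainTheo2}) by $c^2$ and invoke the elementary identity
\[
\frac{\ldc}{\ld} \; = \; 1 - \frac{\lo^2}{\ld} \; = \; \frac{n-1-\nu({\bm\lambda})}{n-1}
\]
(and its $\mu$-analogue) to reproduce (\ref{4MainCoro2}) after cancelling the remaining factors of $(n-1)$.

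The skewness and excess kurtosis are scale-invariant functionals of a centered random variable: since $\mathbb{E}({\tt RV}_c^k) = \mathbb{E}({\tt CV}_c^k)/c^k$, the factors of $c$ cancel from the ratios defining $\mathbb{A}$ and $\bbGamma$, so these quantities coincide for ${\tt RV}$ and ${\tt CV}$, giving the right-hand equality in (\ref{4MainCoro4}) immediately. For (\ref{4MainCoroo3}), I would insert (\ref{4MainTheo3}) and (\ref{4MainTheo2}) into $\mathbb{E}({\tt CV}_c^3)/\mathbb{E}^{3/2}({\tt CV}_c^2)$, cancel the common factor $(n-1)^3$, and regroup the remaining rational function of $n$ to produce the prefactor $\sqrt{8(n-2)(n+1)}/[(n-3)(n+3)]$ multiplied by $\ltc\, \mtc/(\ldc\, \mdc)^{3/2} = a({\bm\lambda})\, a({\bm\mu})$.

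There is no genuine obstacle: the entire substance of the corollary is carried by Theorem \ref{CVtheo}, and the only work beyond routine algebra is the bookkeeping that translates between centered spectral moments, normalized traces, and the effective dimensionality $\nu$.
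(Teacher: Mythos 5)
Your proposal is correct and follows exactly the route the paper takes: the paper's entire ``proof'' is the one-line remark that ${\tt RV}={\tt CV}/[(n-1)\sqrt{\ld\,\md}\,]$ with a deterministic denominator, so Corollary \ref{RVcol} follows from Theorem \ref{CVtheo} by dividing by powers of that constant and translating spectral moments into $\nu$ and $a$. Your write-up simply makes explicit the bookkeeping (e.g.\ $\ldc/\ld=(n-1-\nu({\bm\lambda}))/(n-1)$) that the paper leaves implicit.
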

where ${\tt RV}_c={\tt RV}-\mathbb{E}({\tt RV})$, and $a({\bm \lambda})=\ltc/(\ldc)^\frac32$ is the {\em spectral skewness}. The quantity 
 \begin{equation}
\label{nux}
\nu({\bm \lambda})=\frac{\mbox{Tr}^2(\mathbf{K}_X)}{\mbox{Tr}(\mathbf{K}_X^2)}=\frac{(\sum_{\alpha\ge1}\lambda_\alpha)^2}{\sum_{\alpha\ge1}\lambda^2_\alpha}=(n-1)\frac{\overline{\bm{\lambda}}^2}{\overline{\bm{\lambda}^2}}
\end{equation}
has appeared at times as an adjusted degrees of freedom  in multivariate tests of the general linear model  
\citep[see e.g.][]{geisser1958extension,worsley1995analysis,schlich1996defining, abdi2010congruence}. It provides a measure of  sphericity or {\em effective dimensionality} of configuration $(\mathbf{f},\mathbf{D}_X)$. Its minimum  $\nu({\bm \lambda})=1$ is attained for univariate configurations. Its maximum  $\nu({\bm \lambda})=n-1$ is attained for uniform dilatations of the discrete distances $\mathbf{D}^0_X$ (section \ref{secWmssk}), in which case $\mbox{$\mathbb{V}$\rm ar}({\tt RV}) =0$ since ${\tt RV}$ is then concentrated on $\sqrt{\nu({\bm \mu})/(n-1)}$.

The second-order Cornish-Fisher cumulant expansion permits
 to approximatively redress the normal quantiles by taking into account the skewness and the "taildeness" of 
a non-normal distribution  \citep[see e.g.][]{kendall1977advanced,amedee2019computation}. The observed RV is statistically significant at level $\alpha$ if (one-tailed test)
\begin{equation}
\label{sigTest}
\begin{split}
&\qquad\qquad\qquad \underbrace{\frac{{\tt RV}- \mathbb{E}({\tt RV})}{\sqrt{\mbox{$\mathbb{V}$ar}({\tt RV})}}}_{\mbox{\small\em $z$-score}} \qquad
\: \: > \: \:   \underbrace{u_{1-\alpha}}_{\mbox{\small\em standard normal quantile}} \qquad \\
     & +\quad \underbrace{\frac{\mbox{$\mathbb{A}$}({\tt RV})}{6}(u^2_{1-\alpha}-1)+
\frac{\bbGamma({\tt RV})}{24}(u^3_{1-\alpha}-3u_{1-\alpha})-\frac{\mbox{$\mathbb{A}^2$}({\tt RV})}{36}(2u^3_{1-\alpha}-5u_{1-\alpha})}_{\mbox{\small\em correction to the normal distribution}}\enspace. 
\end{split}
\end{equation}

\subsection{Invariant orthogonal integration}
\label{invorthin}
The rest of the paper is devoted to presenting invariant orthogonal integration and proving Theorem \ref{CVtheo}. 

Consider two standard kernels $\mathbf{K}_X=\mathbf{U}\bm{\Lambda}\mathbf{U}^\top$ and 
$\mathbf{K}_Y=\mathbf{V}\mathbf{M}\mathbf{V}^\top$ with $\mathbf{M}=\mbox{diag}(\bm{\mu})$, where $\mathbf{U}, \mathbf{V}\in\mathbb{R}^{n\times (n-1)}$  (\ref{Knontrivial}). The numerator ${\tt CV}$ of ${\tt RV}$ in (\ref{RVcoeffi}) reads
\begin{equation}
\label{CVdef1}
{\tt CV}=\mbox{Tr}(\mathbf{K}_X\mathbf{K}_Y)=\mbox{Tr}(\mathbf{U}\bm{\Lambda}\mathbf{U}^\top\mathbf{V}\mathbf{M}\mathbf{V}^\top)=
\sum_{\alpha=1}^{n-1}\sum_{\beta=1}^{n-1} \lambda_{\alpha}\mu_{\beta} P_{\alpha\beta}
\end{equation}
where 
\begin{displaymath}
 P_{\alpha\beta}=\sum_{i,j=1}^n u_{i\alpha}u_{j\alpha}v_{i\beta}v_{j\beta}=(\sum_{i=1}^n u_{i\alpha}v_{i\beta})^2\enspace.
\end{displaymath}
Identities   $\mathbf{U}\mathbf{U}^\top=\mathbf{I}_n-\sqrt{\mathbf{f}}\sqrt{\mathbf{f}}^\top$, $\mathbf{V}^\top\sqrt{\mathbf{f}}=\mathbf{0}_n$ and $\mathbf{V}^\top\mathbf{V}=\mathbf{I}_{n-1}$ from (\ref{Knontrivial})  imply the {\em joint orthogonality} property $\mathbf{V}^\top\mathbf{U}\mathbf{U}^\top \mathbf{V}=\mathbf{I}_{n-1}$, yielding
\begin{equation}
\label{DoublySto}
P_{\bullet\beta}=\sum_{\alpha=1}^{n-1}P_{\alpha\beta}=\sum_{i,j=1}^n (\delta_{ij}-\sqrt{f_if_j})\,  v_{i\beta}v_{j\beta}=\sum_{i=1}^n v_{i\beta}^2=1\enspace
\end{equation}
and, similarly, $P_{\alpha\bullet}=1$.  Hence, the matrix $\mathbf{P}=(P_{\alpha\beta})\in \mathbb{R}^{(n-1)\times(n-1)}$ is non-negative,  
and doubly stochastic: it expresses as a mixture of permutations of $\mathcal{S}_{n-1}$  (Birkhoff–von Neumann theorem). In particular, one gets the crude estimate
\begin{displaymath}
\sum_{\alpha\ge1}\lambda_\alpha\mu_{n-\alpha}\: \le\: {\tt CV}\: \le\: 
\sum_{\alpha\ge1}\lambda_\alpha\mu_{\alpha}\enspace. 
\end{displaymath}
The null hypothesis $H_0$ states that the two configurations  $(\mathbf{f},\mathbf{D}_X)$ and  $(\mathbf{f},\mathbf{D}_Y)$ are unrelated. Under $H_0$, any
relative orientation of a configuration with respect to the other is equally likely. Hence, the first configuration will be rotated by replacing $\mathbf{U}=(u_{i\alpha})$ by $\mathbf{U}\mathbf{T}$, where $\mathbf{T}=(t_{a\alpha}) \in \mathbb{O}_{n-1}$, the orthogonal group of dimension $n-1$. This rotation acts in the non-trivial eigenspace only, leaving the weights $\mathbf{f}$ unchanged. The
term $\mbox{Tr}(\mathbf{K}^2_X)$ remains the same, and the ${\tt CV}$ coefficient becomes
\begin{equation}
\label{CVT}
{\tt CV}(\mathbf{T})=\mbox{Tr}(\mathbf{U}\mathbf{T}\bm{\Lambda}\mathbf{T}^\top\mathbf{U}^\top\mathbf{V}\mathbf{M}\mathbf{V}^\top)=
\sum_{\alpha=1}^{n-1}\sum_{\beta=1}^{n-1} \lambda_{\alpha}\mu_{\beta} P_{\alpha\beta}(\mathbf{T})
\end{equation}
where 
\begin{equation}
\label{PabT}
P_{\alpha\beta}(\mathbf{T})=\sum_{a,b=1}^{n-1}t_{a\alpha}t_{b\alpha}\sum_{i,j=1}^n u_{ia}u_{jb}v_{i\beta}v_{j\beta}\enspace. 
\end{equation}
The idea  of {\em invariant orthogonal integration} is to compute the expectation of the moments 
\begin{equation}
\label{momCVq}
\mathbb{E}({\tt CV}^q):=\int_{\mathbb{O}_{n-1}}{\tt CV}^q(\mathbf{T})\: d\mu(\mathbf{T})\qquad\qquad q=1,2,\ldots
\end{equation}
by averaging over all possible rotations $\mathbf{T}\in  \mathbb{O}_{n-1}$ distributed 
by  the  {\em invariant Haar measure} $d\mu(\mathbf{T})$  normalized to $\int_{\mathbb{O}_{n-1}}d\mu(\mathbf{T})=1$.  The moment  generating function reads
\begin{equation}
\label{MomGneFun}
\mathbb{E}(\exp(t\, {\tt CV}))=\int_{\mathbb{O}_{n-1}}\exp(t\: \mbox{Tr}(\mathbf{T}\bm{\Lambda}{\mathbf{T}^\top}\mathbf{A}))\: d\mu(\mathbf{T})
\quad\quad\mbox{with} \quad\mathbf{A}=\mathbf{U}^\top\mathbf{V}\mathbf{M}\mathbf{V}^\top\mathbf{U}\, .
\end{equation}

Define $[n]=\{1,2,\ldots,n\}$. Computing (\ref{momCVq}) involves 
the {\em orthogonal coefficients}, defined in whole generality as 
\begin{equation}
\label{orthoCoeff}
\mathcal{I}_{\mathbf a}^{\bm \omega}=
\int_{\mathbb{O}_{n-1}}d\mu(\mathbf{T})\: t_{a_1 \omega_1}\, t_{a_2 \omega_2}\ldots   t_{a_{2q} \omega_{2q}}
\end{equation}
where  the multi-indices ${\mathbf a}=(a_1a_2\ldots a_{2q})$ and ${\bm  \omega}=( \omega_1 \omega_2\ldots \omega_{2q})$ are elements of $[n-1]^{2q}$
that is, ${\mathbf a}$ and ${\bm  \omega}$ are words of length $2q$ on the alphabet $[n-1]$. 

To ease the notations, define  $A_q=[n-1]^{q}$ and, for ${\bm \alpha}=(\alpha_1\ldots\alpha_q)\in A_q$, define 
 ${\bm \alpha}{\bm \alpha}=(\alpha_1\alpha_1 \alpha_2\alpha_2\ldots\alpha_q\alpha_q)\in  A_{2q}$.  Identities (\ref{CVT}), (\ref{PabT}), (\ref{momCVq}) and (\ref{orthoCoeff}) yield
\begin{align}
\label{heavy}
\mathbb{E}({\tt CV}^q)    & = \sum_{\alpha_1\ldots\alpha_q=1}^{n-1}\underbrace{\lambda_{\alpha_1}\cdots\lambda_{\alpha_q}}_{\lambda_{\bm \alpha}}\sum_{\beta_1\ldots\beta_q=1}^{n-1}  
\underbrace{\mu_{\beta_1}\cdots\mu_{\beta_q}}_{\mu_{\bm \beta}}\sum_{i_1\ldots i_{2q}=1}^{n}\underbrace{v_{i_1\beta_1}v_{i_2\beta_1}v_{i_3\beta_2}v_{i_4\beta_2}\ldots v_{i_{2q-1}\beta_q}v_{i_{2q}\beta_q}}_{v_{{\mathbf i}{\bm \beta}{\bm \beta}}}
\notag  \\
\quad \times    \sum_{a_1\ldots a_{2q}=1}^{n-1}& \underbrace{\mathcal{I}_{a_1a_2\ldots a_{2q-1}a_{2q}}^{\alpha_1\alpha_1\ldots\alpha_q\alpha_q}}_{\mathcal{I}_{\mathbf a}^{{\bm \alpha}{\bm \alpha}}}\underbrace{u_{i_1a_1}u_{i_2a_2}   \ldots u_{i_{2q}a_{2q}}}_{u_{{\mathbf i}{\mathbf a}}}
=\sum_{{\bm \alpha}\in A_q}\lambda_{\bm \alpha}\sum_{{\bm \beta}\in A_q}\mu_{\bm \beta}\underbrace{\sum_{{\mathbf i}\in [n]^{2q}}v_{{\mathbf i}{\bm \beta}{\bm \beta}}
\sum_{{\mathbf a}\in A_{2q}}\mathcal{I}_{\mathbf a}^{{\bm \alpha}{\bm \alpha}}\, u_{{\mathbf i}{\mathbf a}}}_{\mathbb{E}(P_{{\bm \alpha}{\bm \beta}})=\mathbb{E}(P_{\alpha_1\beta_1}P_{\alpha_2\beta_2}\cdots P_{\alpha_q\beta_q})}\, .
\end{align}
The knowledge of $\mathcal{I}_{\mathbf a}^{{\bm \alpha}{\bm \alpha}}$ together with joint orthogonality properties  will yield exact expressions for $\mathbb{E}({\tt CV}^q)$ in terms of   spectral moments of ${\bm \lambda}$ and ${\bm \mu}$, or equivalently in terms of traces of integer powers of $\mathbf{K}_X$ and $\mathbf{K}_Y$, as demonstrated in the next sections for $q=1,2,3,4$.

\subsection{Computing low-order orthogonal coefficients}
\label{Computing low-order}
Evaluating the orthogonal coefficients
(\ref{orthoCoeff}) is a major topic in random matrix theory and free probability, and its systematic handling  is presented in section \ref{q4}. Yet, as observed by some authors
\citep[see e.g.][]{aubert2003invariant, braun2006invariant, yamamoto2017probabilistic}, well-inspired invariance considerations (Lemmas \ref{Mainlemma1} and \ref{Mainlemma2} below) suffice in determining more directly the values of the orthogonal coefficients of low order.
  
Since $d\mu(\mathbf{-T})=d\mu(\mathbf{T})$,  coefficients (\ref{orthoCoeff}) are zero unless each index in ${\mathbf a}$ and in ${\bm\omega}$ occurs 
an even number of times, with a total of $2q$ occurrences, where $q$ defines  the {\em order} of the orthogonal coefficient. Also, applying the same permutation on the two multi-indexes, or exchanging the multi-indexes leaves the coefficients unchanged. Furthermore, the particular value taken by an index is irrelevant: only matters its multiplicity. For instance: 
\begin{equation*}
\label{ }
\mathcal{I}_{abcd}^{\alpha\beta\gamma\delta}=\mathcal{I}_{\alpha\beta\gamma\delta}^{abcd}=\mathcal{I}_{dacb}^{\delta\alpha\gamma\beta}\neq \mathcal{I}_{adcb}^{\delta\alpha\gamma\beta}\quad\mbox{in general ; }
\qquad\qquad \mathcal{I}_{abcc}^{\alpha\alpha\gamma\gamma}=\mathcal{I}_{accb}^{\alpha\alpha\gamma\gamma}=0\quad\mbox{if $a\neq b$}\enspace. 
\end{equation*}
Also, for $\alpha\neq\gamma$ and $a\neq b$, 
\begin{equation*}
\label{ }
\mathcal{I}_{aabb}^{\alpha\alpha\gamma\gamma}=\mathcal{I}_{bbaa}^{\alpha\alpha\gamma\gamma}=\mathcal{I}_{1122}^{2211}=\mathcal{I}_{1122}^{1122}
\enspace. 
\end{equation*}
\begin{lemma}[proved in the Appendix]
\label{Mainlemma1}
Let $\alpha\neq\gamma$ and let ${\bm \varepsilon}$ be a  multi-index not containing $\alpha, \gamma$. For any indices $a,b,c,d$ and multi-index ${\bm e}$ of the same size as ${\bm \varepsilon}$ 
\begin{subequations}
\begin{align}
\mathcal{I}_{abcd{{\bm e}}}^{\alpha\alpha\alpha\alpha{{\bm \varepsilon}}}&=\mathcal{I}_{abcd{{\bm e}}}^{\alpha\alpha\gamma\gamma{{\bm \varepsilon}}}+\mathcal{I}_{abcd{{\bm e}}}^{\alpha\gamma\alpha\gamma{{\bm \varepsilon}}}+ \mathcal{I}_{abcd\mathbf{e}}^{\alpha\gamma\gamma\alpha{{\bm \varepsilon}}} \enspace. 
 \label{lemmeFond}\\
\mbox{In particular,}\qquad  \mathcal{I}_{aacc{\bm e}}^{\alpha\alpha\alpha\alpha{\bm \varepsilon}} &=\mathcal{I}_{aacc{\bm e}}^{\alpha\alpha\gamma\gamma{\bm \varepsilon}}+2\mathcal{I}_{aacc{\bm e}}^{\alpha\gamma\alpha\gamma{\bm \varepsilon}} \label{lemme1}\\
\mbox{and}\qquad   \mathcal{I}_{aaaa{\bm e}}^{\alpha\alpha\alpha\alpha{\bm \varepsilon}} &= 3 \mathcal{I}_{aaaa{\bm e}}^{\alpha\alpha\gamma\gamma\vec{{\bm \varepsilon}}} \enspace. 
\label{lemme2}
\end{align}
\end{subequations}
\end{lemma}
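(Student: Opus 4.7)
The plan is to exploit right-invariance of the Haar measure on $\mathbb{O}_{n-1}$ under the planar rotation $R_{\alpha\gamma}(\theta)$ that acts only on the $(\alpha,\gamma)$-coordinate plane and fixes every other coordinate axis. Because $\bm\varepsilon$ by hypothesis contains neither $\alpha$ nor $\gamma$, such a rotation leaves every factor $t_{e_i\varepsilon_i}$ in the integrand of $\mathcal{I}_{abcd\bm e}^{\alpha\alpha\alpha\alpha\bm\varepsilon}$ untouched and only mixes the four $\alpha$-factors $t_{a\alpha}t_{b\alpha}t_{c\alpha}t_{d\alpha}$. Averaging over the rotation angle will therefore re-express this coefficient as a linear combination of orthogonal coefficients whose upper-index word contains two $\alpha$'s and two $\gamma$'s, which is exactly the shape of (\ref{lemmeFond}).

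Concretely, the substitution $\mathbf{T}\mapsto\mathbf{T}R_{\alpha\gamma}(\theta)$ sends each $t_{x\alpha}$ to $\cos\theta\,t_{x\alpha}+\sin\theta\,t_{x\gamma}$. Since the Haar integral is independent of $\theta$, I may replace the integrand by its average over $\theta\in[0,2\pi]$. Expanding the product $\prod_{x\in\{a,b,c,d\}}(\cos\theta\,t_{x\alpha}+\sin\theta\,t_{x\gamma})$ produces $16$ monomials indexed by the subset $S\subseteq\{a,b,c,d\}$ of positions carrying a $\gamma$, with angular weight $\cos^{4-|S|}\theta\sin^{|S|}\theta$. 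Odd-$|S|$ weights average to zero, while the surviving $\theta$-averages equal $3/8$ for $|S|\in\{0,4\}$ and $1/8$ for $|S|=2$. The column swap $\alpha\leftrightarrow\gamma$, another Haar symmetry thanks to the disjointness hypothesis on $\bm\varepsilon$, identifies the $|S|=4$ integral with the $|S|=0$ integral and pairs the six $|S|=2$ integrals into the three equal pairs $\mathcal{I}^{\alpha\alpha\gamma\gamma}=\mathcal{I}^{\gamma\gamma\alpha\alpha}$, $\mathcal{I}^{\alpha\gamma\alpha\gamma}=\mathcal{I}^{\gamma\alpha\gamma\alpha}$, $\mathcal{I}^{\alpha\gamma\gamma\alpha}=\mathcal{I}^{\gamma\alpha\alpha\gamma}$. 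Integrating the averaged identity and assembling terms gives
\begin{equation*}
\mathcal{I}_{abcd\bm e}^{\alpha\alpha\alpha\alpha\bm\varepsilon}=\tfrac{3}{4}\,\mathcal{I}_{abcd\bm e}^{\alpha\alpha\alpha\alpha\bm\varepsilon}+\tfrac{1}{4}\bigl(\mathcal{I}_{abcd\bm e}^{\alpha\alpha\gamma\gamma\bm\varepsilon}+\mathcal{I}_{abcd\bm e}^{\alpha\gamma\alpha\gamma\bm\varepsilon}+\mathcal{I}_{abcd\bm e}^{\alpha\gamma\gamma\alpha\bm\varepsilon}\bigr),
\end{equation*}
which rearranges immediately into (\ref{lemmeFond}).

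The two specializations follow from elementary identifications of integrands under index collisions. Setting $a=b$ and $c=d$, the integrands of $\mathcal{I}_{aacc\bm e}^{\alpha\gamma\alpha\gamma\bm\varepsilon}$ and $\mathcal{I}_{aacc\bm e}^{\alpha\gamma\gamma\alpha\bm\varepsilon}$ both reduce to $t_{a\alpha}t_{a\gamma}t_{c\alpha}t_{c\gamma}\prod_i t_{e_i\varepsilon_i}$, collapsing the last two terms of (\ref{lemmeFond}) into (\ref{lemme1}). Specializing further to $a=c$ makes all three right-hand integrands equal to $t_{a\alpha}^2 t_{a\gamma}^2\prod_i t_{e_i\varepsilon_i}$, yielding (\ref{lemme2}). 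The only genuinely fiddly step is the combinatorial bookkeeping of the six $|S|=2$ patterns and their pairing under the $\alpha\leftrightarrow\gamma$ column swap; both are mechanical but deserve some care, and are the only places where the disjointness hypothesis on $\bm\varepsilon$ is actively used.
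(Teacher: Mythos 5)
Your proof is correct and rests on the same key idea as the paper's: invariance of the Haar measure under a Givens rotation in the $(\alpha,\gamma)$-coordinate plane, which leaves the $\bm{\varepsilon}$-factors untouched because $\bm{\varepsilon}$ avoids $\alpha,\gamma$. The only (cosmetic) difference is that the paper applies the rotation at a fixed arbitrary angle $\xi$ to $\mathcal{I}_{abcd}^{\alpha\alpha\gamma\gamma}$ and cancels via $\cos^4\xi+\sin^4\xi+2\cos^2\xi\sin^2\xi=1$, whereas you rotate $\mathcal{I}_{abcd}^{\alpha\alpha\alpha\alpha}$, average over the angle, and invoke the $\alpha\leftrightarrow\gamma$ column swap.
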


\begin{lemma}
\label{Mainlemma2}
For any multi-index  ${\bm e}$ not containing $a$,  and for any  unrestricted multi-index ${\bm \varepsilon}$ of the same size,
\begin{equation}
\label{lemme3}
 \sum_{a=1}^{n-1}\mathcal{I}^{\alpha\beta{\bm \varepsilon}}_{aa{\bm e}}=\delta_{\alpha\beta}\: \mathcal{I}^{{\bm \varepsilon}}_{\bm e}\enspace. 
 \end{equation}
\end{lemma}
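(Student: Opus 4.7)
The plan is essentially a one-line argument: swap the sum over $a$ with the Haar integral, apply column orthogonality of $\mathbf{T}$, and pull the resulting Kronecker delta out.

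More precisely, I first expand the definition of the orthogonal coefficient,
\begin{equation*}
\sum_{a=1}^{n-1}\mathcal{I}^{\alpha\beta\bm{\varepsilon}}_{aa\bm{e}} = \int_{\mathbb{O}_{n-1}}\Bigl(\sum_{a=1}^{n-1} t_{a\alpha}\, t_{a\beta}\Bigr)\, t_{e_1\varepsilon_1}\, t_{e_2\varepsilon_2}\cdots\, d\mu(\mathbf{T}),
\end{equation*}
where the exchange of finite sum and integral is immediate. The hypothesis that $\bm{e}$ does not contain the letter $a$ merely ensures that the dummy summation variable does not clash notationally with any of the remaining fixed row-indices $e_1, e_2, \ldots$

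Next I invoke the orthogonality relation $\mathbf{T}^\top\mathbf{T}=\mathbf{I}_{n-1}$ defining $\mathbb{O}_{n-1}$, which in coordinates reads
\begin{equation*}
\sum_{a=1}^{n-1} t_{a\alpha}\, t_{a\beta} = \delta_{\alpha\beta}\enspace ,
\end{equation*}
holding pointwise on $\mathbb{O}_{n-1}$. Substituting this identity under the integral sign yields
\begin{equation*}
\sum_{a=1}^{n-1}\mathcal{I}^{\alpha\beta\bm{\varepsilon}}_{aa\bm{e}} = \delta_{\alpha\beta}\int_{\mathbb{O}_{n-1}} t_{e_1\varepsilon_1}\, t_{e_2\varepsilon_2}\cdots\, d\mu(\mathbf{T}) = \delta_{\alpha\beta}\, \mathcal{I}^{\bm{\varepsilon}}_{\bm{e}}\enspace ,
\end{equation*}
which is the claim.

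There is no serious obstacle here; the content of the lemma is just the column-orthonormality of $\mathbf{T}$ averaged against the rest of the monomial. The analogous statement summing on the upper index $\alpha$ (with $e_1 \neq e_2$) would follow identically from the row-orthonormality $\mathbf{T}\mathbf{T}^\top = \mathbf{I}_{n-1}$, and indeed Haar invariance guarantees that these two forms of orthogonality produce matching identities for the coefficients $\mathcal{I}$.
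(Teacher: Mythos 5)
Your proof is correct and is essentially identical to the paper's: both reduce the claim to the pointwise orthogonality relation $\sum_{a=1}^{n-1} t_{a\alpha}t_{a\beta}=\delta_{\alpha\beta}$ applied under the Haar integral. The paper states this in one line; you merely spell out the exchange of the finite sum with the integral and the role of the hypothesis on $\bm{e}$, which is fine.
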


\begin{proof}

(\ref{lemme3}) follows directly from $\bm{T}\bm{T}^\top=\bm{I}_{n-1}$, that is  $\sum_{a=1}^{n-1} t_{a\alpha}t_{a\beta}=\delta_{\alpha\beta}$. 
\end{proof}

\subsection{The first and second moments}
\label{q12}
The computation of the first moment, which has been derived in the literature under various strategies, is straightforward  : $\mathcal{I}_{ab}^{\alpha\alpha}=\delta_{ab}\; \mathcal{I}_{aa}^{\alpha\alpha}$, where 
$\mathcal{I}_{aa}^{\alpha\alpha}$ is independent of $a$ and $\alpha$. By (\ref{CVdef1}), (\ref{heavy}),  (\ref{lemme3}) and joint orthogonality
\begin{equation}
\label{firstorder}
 \mathcal{I}_{aa}^{\alpha\alpha}=\frac{1}{n-1}\qquad\qquad \mathbb{E}(P_{\alpha\beta})=\frac{1}{n-1}\qquad \qquad
\mathbb{E}({\tt CV})=
\frac{1}{n-1}\sum_{\alpha,\beta=1}^{n-1}\lambda_\alpha\mu_\beta\enspace. 
\end{equation}

The computation of the second moment involves four orthogonal coefficients, 
 namely (all super- and sub-indices in (\ref{treize}) are {\em distinct})
\begin{equation}
\label{treize}
E:=\mathcal{I}_{aaaa}^{\alpha\alpha\alpha\alpha}\qquad\quad
F:=\mathcal{I}_{aacc}^{\alpha\alpha\alpha\alpha}=\mathcal{I}_{aaaa}^{\alpha\alpha\gamma\gamma}\qquad\quad
G:=\mathcal{I}_{aacc}^{\alpha\alpha\gamma\gamma}\quad\qquad
H:=\mathcal{I}_{aacc}^{\alpha\gamma\alpha\gamma}\enspace. 
\end{equation}
They satisfy 
\begin{displaymath}
F\stackrel{(\ref{lemme1})}{=}G+2H\qquad
E\stackrel{(\ref{lemme2})}{=}3F\qquad
(n-2)G+F\stackrel{(\ref{lemme3}),(\ref{firstorder})}{=}\frac{1}{n-1}\qquad
(n-2)H+F\stackrel{(\ref{lemme3}),(\ref{firstorder})}{=}0
\end{displaymath}
with solution
\begin{equation}
\label{solkappa}
E=3(n-2)\kappa\: \: ,\: \: 
F=(n-2)\kappa\: \: ,\: \: 
G=n\kappa \: \: ,\: \: 
H=-\kappa\: \: ,\: \: 
\kappa=\frac{1}{(n-2)(n-1)(n+1)}
\end{equation}
Hence (the expression is also valid for four possibly coinciding sub-indices, since $E=3F$)
\begin{equation}
\label{2ordis}
\mathcal{I}_{abcd}^{\alpha\alpha\alpha\alpha}=(\delta_{ab}\delta_{cd}+\delta_{ac}\delta_{bd}+\delta_{ad}\delta_{bc})\,F
\end{equation}
and (for $\alpha\neq\gamma$)
\begin{equation}
\label{2orsim}
\mathcal{I}_{abcd}^{\alpha\alpha\gamma\gamma}=\delta_{ab}\delta_{cd}\,  G+(\delta_{ac}\delta_{bd}+\delta_{ad}\delta_{bc})\,H\enspace. 
\end{equation}
As a result, performing $\delta_{\alpha\gamma}$ $\times$ (\ref{2ordis})  $+$  $(1-\delta_{\alpha\gamma})$ $\times$ (\ref{2orsim}) yields the general  formula, where the super- and the sub-indices may be distinct or not
\begin{equation*}
\label{ }
\mathcal{I}_{abcd}^{\alpha\alpha\gamma\gamma}=\kappa\, [n\delta_{ab}\delta_{cd} -(\delta_{ac}\delta_{bd}+\delta_{ad}\delta_{bc})-2\delta_{\alpha\gamma}\delta_{ab}\delta_{cd} +(n-1)\delta_{\alpha\gamma}(\delta_{ac}\delta_{bd}+\delta_{ad}\delta_{bc})]
\end{equation*}
which finally implies, in view of (\ref{heavy}) and joint orthogonality, 
\begin{equation}
\label{P2}
\mathbb{E}(P_{\alpha\beta}P_{\gamma\delta})=\kappa\, [n-2\delta_{\alpha\gamma}-2\delta_{\beta\delta}+2(n-1)\delta_{\alpha\gamma}\delta_{\beta\delta}]\enspace. 
\end{equation}
Hence, by (\ref{heavy}) and (\ref{spmocespmo})
\begin{equation}
\label{secondorder}
\mathbb{E}({\tt CV}^2)=\kappa(n-1)^3\, [n(n-1)\lo^2\:  \mo^2-2\lo^2\: \md-2\ld \: \mo^2+2\ld\:  \md]
\enspace.
\end{equation}
Substracting $\mathbb{E}^2({\tt CV})$ obtained in  (\ref{firstorder}), substituting the value of $\kappa$ in (\ref{solkappa}) and rearranging terms yields (\ref{4MainTheo2}). 

Expressions for the second centered moments  (\ref{4MainTheo2}) and (\ref{4MainCoro2}) are {\em simpler} than the corresponding quantities obtained, in the unweighted  setting,  by averaging over all permutations of the $n$ objects~: the latter contain additional correction terms, as derived in \citet{kazi1995refined}.
See also \citet{heo1998permutation}, \citet{josse2008testing} and \citet{abdi2010congruence}. 

\subsection{The third moment}
\label{q3}
The third moment reads
\begin{equation}
\label{expres3lm}
\mathbb{E}({\tt CV}^3)=\sum_{\alpha,\beta,\gamma,\delta,\varepsilon,\zeta=1}^{n-1}\lambda_\alpha\, \lambda_\gamma\, \lambda_\varepsilon\, \mu_\beta\, \mu_\delta\, \mu_\zeta
\: \mathbb{E}(P_{\alpha\beta}P_{\gamma\delta}P_{\varepsilon\zeta})
\end{equation}
where
\begin{equation}
\label{CiJ}
{\textstyle\mathbb{E}(P_{\alpha\beta}P_{\gamma\delta}P_{\varepsilon\zeta})  =  \sum_{i,j,k,l,s,t=1}^n  v_{i\beta} v_{j\beta}  v_{k\delta}  v_{l\delta}  v_{s\zeta}  v_{t\zeta}\: \sum_{a,b,c,d,e,f=1}^{n-1} \mathcal{I}_{abcdef}^{\alpha\alpha\gamma\gamma\varepsilon\varepsilon}\: u_{ia}u_{jb}u_{kc} u_{ld}u_{se} u_{tf} }
\end{equation}
and involves eleven third-order orthogonal coefficients, namely (all super- and  sub-indices  in (\ref{seize}) are {\em distinct})
\begin{align}
\label{seize}
 L&:=\mathcal{I}_{aaaaaa}^{\alpha\alpha\alpha\alpha\alpha\alpha}   &   M&:=\mathcal{I}_{aaaacc}^{\alpha\alpha\alpha\alpha\alpha\alpha}   &  N&:=\mathcal{I}_{aaccee}^{\alpha\alpha\alpha\alpha\alpha\alpha} &  P&:=\mathcal{I}_{aaaacc}^{\alpha\alpha\alpha\alpha\gamma\gamma}   \notag\\
Q&:=\mathcal{I}_{aaccaa}^{\alpha\alpha\alpha\alpha\gamma\gamma}   &   R&:=\mathcal{I}_{aaacac}^{\alpha\alpha\alpha\alpha\gamma\gamma}   &  S&:=\mathcal{I}_{aaccee}^{\alpha\alpha\alpha\alpha\gamma\gamma}  &  T&:=\mathcal{I}_{aceeac}^{\alpha\alpha\alpha\alpha\gamma\gamma}  \\
U&:=\mathcal{I}_{aaccee}^{\alpha\alpha\gamma\gamma\varepsilon\varepsilon}   &  V&:=\mathcal{I}_{acacee}^{\alpha\alpha\gamma\gamma\varepsilon\varepsilon}   &  W&:=\mathcal{I}_{aeceac}^{\alpha\alpha\gamma\gamma\varepsilon\varepsilon}   \enspace. &  & \notag 
\end{align}
Handcrafted computations are a bit awkward, yet feasible, with the result

\begin{lemma}[proved in the Appendix]
\label{ThirdOrderP}
\begin{equation}
\label{victoireYes}
\begin{split}
\frac{1}{\hat{\kappa}}\: \mathbb{E}(P_{\alpha\beta}P_{\gamma\delta}P_{\varepsilon\zeta})  & =  (n^2+n-4) -2(n+1)(\sigma+\tau)+16(\varphi+\psi)+8\sigma\tau \\
 & -8(n-1)(\sigma\psi+\tau\varphi) +8(n-1)^2\varphi\psi+2(n-3)(n+3)\omega
\end{split}
\end{equation}
where 
\begin{equation}
\label{kappahat}
\hat{\kappa}=\frac{\kappa}{(n-3)(n+3)}=\frac{1}{(n-3)(n-2)(n-1)(n+1)(n+3)}
\end{equation}
and 
\begin{equation}
\begin{split}
\label{NewDiracs}
 &\sigma =\delta_{\alpha\gamma}+\delta_{\alpha\varepsilon}+\delta_{\gamma\varepsilon}
\qquad  \qquad \quad
\tau=\delta_{\beta\delta}+\delta_{\beta\zeta}+\delta_{\delta\zeta}
\qquad \qquad \quad
\omega=\delta_{\alpha\gamma}\delta_{\beta\delta}+\delta_{\alpha\varepsilon}\delta_{\beta\zeta}+\delta_{\gamma\varepsilon}\delta_{\delta\zeta}
\\
 &\varphi  =\delta_{\alpha\gamma}\delta_{\alpha\varepsilon}\delta_{\gamma\varepsilon}=\delta_{\alpha\gamma}\delta_{\alpha\varepsilon}=
\delta_{\alpha\gamma}\delta_{\gamma\varepsilon}=\delta_{\alpha\varepsilon}\delta_{\gamma\varepsilon}
\qquad
\qquad
\psi=\delta_{\beta\delta}\delta_{\beta\zeta}\delta_{\delta\zeta}=\delta_{\beta\delta}\delta_{\beta\zeta}=\delta_{\beta\delta}\delta_{\delta\zeta}=\delta_{\beta\zeta}\delta_{\delta\zeta}\enspace. 
\end{split}
\end{equation}
\end{lemma}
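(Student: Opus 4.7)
The plan is to expand $\mathbb{E}(P_{\alpha\beta}P_{\gamma\delta}P_{\varepsilon\zeta})$ from (\ref{CiJ}) by first contracting the sums on $i,j,k,l,s,t$. Each such sum collapses to an entry of the $(n-1)\times(n-1)$ matrix $\mathbf{Z}:=\mathbf{U}^\top\mathbf{V}$, which is orthogonal by the joint orthogonality $\mathbf{V}^\top\mathbf{U}\mathbf{U}^\top\mathbf{V}=\mathbf{I}_{n-1}$ recalled in (\ref{Knontrivial}). This reduces the computation to
\begin{displaymath}
\mathbb{E}(P_{\alpha\beta}P_{\gamma\delta}P_{\varepsilon\zeta})=\sum_{a,b,c,d,e,f=1}^{n-1}\mathcal{I}^{\alpha\alpha\gamma\gamma\varepsilon\varepsilon}_{abcdef}\, Z_{a\beta}Z_{b\beta}Z_{c\delta}Z_{d\delta}Z_{e\zeta}Z_{f\zeta},
\end{displaymath}
so that all remaining dependence on $\mathbf{U},\mathbf{V}$ will be absorbed by the orthogonality $\sum_a Z_{a\beta}Z_{a\beta'}=\delta_{\beta\beta'}$ applied to appropriate pairs.

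I would then compute each of the eleven orthogonal coefficients in (\ref{seize}). Lemma \ref{Mainlemma1} supplies linear relations connecting the three families of coefficients (all three super-index pairs equal $\{L,M,N\}$, two equal $\{P,Q,R,S,T\}$ and all distinct $\{U,V,W\}$). For instance, (\ref{lemme2}) immediately gives $L=3M$, while (\ref{lemmeFond}), applied with various $\mathbf{e}$ and $\boldsymbol{\varepsilon}$, expresses each coefficient of one family as a signed combination from the next. Lemma \ref{Mainlemma2} then furnishes the remaining equations by contracting one pair of sub-indices, so that every sixth-order coefficient reduces to a fourth-order one whose value is known from (\ref{solkappa}). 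Together, these yield an $11\times 11$ linear system whose solution is a rational function of $n$ with denominator $(n-3)(n-2)(n-1)(n+1)(n+3)$, accounting for the definition of $\hat{\kappa}$ in (\ref{kappahat}).

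Armed with the eleven values, I would substitute back and perform the sub-index sums. For $(\alpha,\gamma,\varepsilon)$ and $(\beta,\delta,\zeta)$ both consisting of three distinct elements, the six $Z$-sums decouple into three orthogonal pairs and produce the background term $\hat{\kappa}(n^2+n-4)$. When two or three of $(\alpha,\gamma,\varepsilon)$ coincide, extra pairings between sub-index blocks survive and, after applying $\sum_a Z_{a\beta}Z_{a\beta'}=\delta_{\beta\beta'}$ once more, contribute factors of $\delta_{\beta\delta}$, $\delta_{\beta\zeta}$ or $\delta_{\delta\zeta}$. The resulting Kronecker combinations reorganize exactly into the indicators $\sigma,\tau,\omega,\varphi,\psi$ of (\ref{NewDiracs}); the expected symmetry between the two configurations then forces the cross-terms $\sigma\tau$, $\sigma\psi+\tau\varphi$ and $\varphi\psi$ in (\ref{victoireYes}).

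The main obstacle is the combinatorial bookkeeping. Each of the eleven coefficients admits several equivalent forms obtained by simultaneously reordering the six position slots in both multi-indices, and one must correctly enumerate all such variants for every coincidence pattern of $(\alpha,\gamma,\varepsilon)$ and $(\beta,\delta,\zeta)$. Once the $11\times 11$ system is solved and the substitution patiently carried out, identification of the polynomial coefficients in front of $\sigma,\tau,\omega,\varphi,\psi$ and their products follows by inspection. The computation is conceptually elementary but arithmetically heavy, which is precisely why the authors invoke the systematic Weingarten calculus when turning to the fourth moment in section \ref{q4}.
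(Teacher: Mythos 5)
Your overall strategy coincides with the paper's: reduce $\mathbb{E}(P_{\alpha\beta}P_{\gamma\delta}P_{\varepsilon\zeta})$ to the eleven sixth-order coefficients of (\ref{seize}), determine them from the linear relations supplied by Lemmas \ref{Mainlemma1} and \ref{Mainlemma2} together with the known fourth-order values (\ref{solkappa}), and contract via joint orthogonality (your matrix $\mathbf{Z}=\mathbf{U}^\top\mathbf{V}$ is exactly what the paper's joint orthogonality encodes). But one relation you propose to feed into the system is wrong: $L=3M$ is false. The correct relation is $L=5M$ (equivalently $L=15N$), and it cannot come from (\ref{lemme2}) at all, because Lemma \ref{Mainlemma1} requires $\alpha\neq\gamma$ and a residual multi-index $\bm{\varepsilon}$ \emph{not containing} $\alpha,\gamma$ --- hypotheses that fail when all six super-indices coincide. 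The paper instead extracts $L$ from the contraction identity $E=(n-2)M+L$ of Lemma \ref{Mainlemma2}. This is not a cosmetic slip: the clean closed form (\ref{victoireYes}) exists only because the pairing expansion (\ref{age0}) with the uniform coefficient $N$ remains valid on coinciding sub-index pairs, which is precisely the statement $M=3N$ and $L=15N$. With $L=9N$ the diagonal terms $a=b=c=d=e=f$ carry a residue $(L-15N)\sum_a Z_{a\beta}^2Z_{a\delta}^2Z_{a\zeta}^2$ that is not reducible to Kronecker deltas, and the case $\alpha=\gamma=\varepsilon$ (hence every $\varphi$-dependent term) breaks down.

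The second gap is the assembly step. Asserting that the Kronecker combinations ``reorganize'' into $\sigma,\tau,\omega,\varphi,\psi$ and that ``the expected symmetry forces the cross-terms'' is not an argument. The paper's proof proceeds by (i) expanding $\mathcal{I}_{abcdef}^{\cdots}$ over the fifteen sub-index pairings separately in each of the three coincidence classes of $(\alpha,\gamma,\varepsilon)$, with coefficients $N$; $S,T$; $U,V,W$; (ii) checking that these expansions hold for coinciding sub-index pairs via $L=5M$, $M=3N$, $3S+12T=M$ and $U+6V+8W=N$ --- a verification entirely absent from your plan; (iii) contracting to obtain the conditional formulas (\ref{triple1})--(\ref{triple3}); and (iv) gluing these with the indicator multipliers $\varphi$, $\delta_{\alpha\gamma}-\varphi$, $1-\sigma+2\varphi$, etc. The cross-terms $\sigma\tau$, $\sigma\psi+\tau\varphi$ and $\varphi\psi$ are produced by this gluing, not by a symmetry principle. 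Without steps (ii) and (iv) you do not obtain a single unrestricted formula valid for arbitrary index coincidences, which is the entire content of (\ref{victoireYes}).
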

One can check with (\ref{P2}) that
\begin{equation*}
\label{ }
\sum_{\varepsilon=1}^{n-1}\mathbb{E}(P_{\alpha\beta}P_{\gamma\delta}P_{\varepsilon\zeta})=
\sum_{\zeta=1}^{n-1}\mathbb{E}(P_{\alpha\beta}P_{\gamma\delta}P_{\varepsilon\zeta})
=\mathbb{E}(P_{\alpha\beta}P_{\gamma\delta})
\end{equation*}
as it must. Inserting (\ref{victoireYes}) in ({\ref{expres3lm}) and using  (\ref{spmocespmo}) yields 
\begin{equation}
\label{thirdmoment}
\begin{split}
\frac{\mathbb{E}({\tt CV}^3)}{(n-1)^4\hat{\kappa}}= &
(n^2+n-4)(n-1)^2\lo^3\mo^3 -6(n+1)(n-1)(\lo\ld\mo^3+\lo^3\mo\md)+ \\
 +16(\lt\mo^3 &+\lo^3\mt+6(n^2+3)\lo\ld\mo\md-24(\lo\ld\, \mt+\lt\mo\md) +8\lt\, \mt\enspace. 
 \end{split}
 \end{equation}
The centered third moment
\begin{equation}
\label{m3cenmom}
\mathbb{E}({\tt CV}_c^3)=\mathbb{E}(({\tt CV}-\mathbb{E}({\tt CV}))^3)=\mathbb{E}({\tt CV}^3)-3\, \mathbb{E}({\tt CV}^2)\, \mathbb{E}({\tt CV})+2\,
\mathbb{E}^3({\tt CV})
\end{equation}
finally reads,  by  (\ref{firstorder}), (\ref{secondorder}) and (\ref{kappahat})
\begin{equation}
\begin{split}
\label{thirdorder}
\frac{\mathbb{E}({\tt CV}_c^3)}{8(n-1)^4\hat{\kappa}} \, =\, & 
4\lo^3\mo^3
-6(\lo\ld\mo^3+\lo^3\mo\md)
+2(\lt\mo^3+\lo^3\mt)
+9\, \lo\ld\mo\md \notag
\\ 
-3(\overline{\bm{\lambda}}\overline{\bm{\lambda}^2}\, \overline{\bm{\mu}^3}+ &  \overline{\bm{\lambda}^3}\overline{\bm{\mu}}\overline{\bm{\mu}^2})+\overline{\bm{\lambda}^3}\, \overline{\bm{\mu}^3}=(\overline{\bm{\lambda}^3}-3\overline{\bm{\lambda}}\overline{\bm{\lambda}^2}+2\overline{\bm{\lambda}}^3)(\overline{\bm{\mu}^3}-3\overline{\bm{\mu}}\overline{\bm{\mu}^2}+2\overline{\bm{\mu}}^3)=  \ltc\: \mtc \notag
\end{split}
\end{equation}
thus proving (\ref{4MainTheo3}). This exact expression for the third moment  seems original, and is considerably simpler than the corresponding expression derived by averaging on the $n!$ object permutations \citep{kazi1995refined}. It depends directly on $n$, but only indirectly on $\mathbf{f}$ through the eigenvalue spectra. Expression (\ref{4MainCoroo3}) for the RV skewness is particularly transparent, and elucidates the cause of the marked positive asymmetry of the RV coefficient, often reported in the literature \citep[see e.g.][]{mielke198434,heo1998permutation,josse2008testing,zhang2009rv}: plainly,  $a({\bm \lambda})>0$ and $ a({\bm \mu})>0$  for typical scree plots.

\

\subsection{The fourth moment}
\label{q4}
Computing $\mathbb{E}({\tt RV}^4)$, or equivalently $\mathbb{E}({\tt CV}^4)$ is clearly untractable with the former pedestrian approach, and a more systematic strategy is needed. The latter is provided by the work around the {\em orthogonal Weingarten functions} \citep[see][]{collins2006integration, collins2009some,matsumoto2012general,collins2013compound,mingo2013real,mingo2017free}.

Consider $\mathcal{P}_{2q}$, the set of all partitions of $\{1,2,\ldots,2q\}$ whose all blocks are of length two, also called {\em pairings}. There are  $(2q-1)!!=(2q-1)(2q-3)\cdots 5\cdot 3$ distinct pairings. For instance, for $q=4$
\begin{displaymath}
\sigma=(13|25|46|78) \qquad\mbox{and}\qquad \tau=(15|26|34|78) 
\end{displaymath}
constitute such pairings. Their {\em join} $\sigma\vee \tau$ (the finest partition coarser than both $\sigma$ and $\tau$) is $\sigma\vee \tau=(123456|78)$. 

In general, the join $\sigma\vee \tau$ of two pairings $\sigma, \tau\in \mathcal{P}_{2q}$ is a partition made of $N(\sigma\vee \tau)$ blocks of even sizes $2l_1, 2l_2,  2l_3,\ldots$, with $l_1\ge l_2\ge l_3\ldots $ and $\sum_{c=1}^{N(\sigma\vee \tau)}l_c=q$. The multi-index $\ell=(l_1,l_2,l_3\ldots)$ constitutes an {\em integer partition} of $q$ (noted $\ell\vdash q$), and defines the {\em type} $\ell(\sigma\vee \tau)$ of $\sigma\vee \tau$.

For $q=4$, five integer partitions or {\em types} are possible, namely 
\begin{displaymath}
\hspace*{-0.3cm}\ell=(1,1,1,1)\equiv(1^4)\qquad  \ell=(2,1,1)\qquad \ell=(2,2)\qquad  \ell=(3,1)\quad\quad \ell=(4)\enspace. 
\end{displaymath}

The orthogonal coefficients (\ref{orthoCoeff}) turn out to express \citep{collins2006integration} as
\begin{equation}
\label{ColSni}
\mathcal{I}^{\bm \omega}_{\mathbf a}=\sum_{\sigma\in  \mathcal{P}_{2q}}
\sum_{\tau\in  \mathcal{P}_{2q}}\: \delta_{\sigma}({\bm \omega})\: \delta_{\tau}({\mathbf a})\:  \mbox{Wg}(\ell(\sigma\vee \tau))
\end{equation}
where (considering now $\sigma$ and $\tau$ as {\em permutations} exchanging the indices belonging to the same block of two), the multi-Kronecker symbols select the pairings $\sigma$  and $\tau$ {\em compatible with the multi-indices},  in the sense
\begin{equation}
\label{multiKronecker}
\delta_{\sigma}({\bm \omega})=\prod_{r=1}^q \delta_{\omega_{\sigma(2r-1)},\omega_{\sigma(2r)}}
\qquad\qquad
\delta_{\tau}({\mathbf a})=\prod_{r=1}^q \delta_{a_{\tau(2r-1)},a_{\tau(2r)}}
\end{equation}
In other words, $\delta_{\sigma}({\bm \omega})=1$ if $\omega_s=\omega_t$ for each pair $(s,t)$ in $\sigma$ (which implies that  all indices in ${\bm \omega}$ must occur an even number of times), and $\delta_{\sigma}({\bm \omega})=0$ otherwise. 

The quantities $\mbox{Wg}(\ell(\sigma\vee \tau))$ appearing in (\ref{ColSni}) are the {\em orthogonal Weingarten functions}, and depend upon the dimension $d=n-1$ as well. They have been computed up to order $q=6$ \citep{collins2009some}. For $q=4$ : 
\begin{align}
\label{WeinValues}
 \mbox{Wg}(1^4)=   &  \:   \phi \:  (n-3)(n+2)(n^2+4n-4) \notag \\
 \mbox{Wg}(2,1,1)=   & \:    \phi \:  (-n^3-3n^2+6n+4)\notag \\
 \mbox{Wg}(2,2)= &  \:  \phi \:  (n^2+3n+14) \\
\mbox{Wg}(3,1)= & \:  \phi \:  (n-1)(2 n+6)\notag \\
\mbox{Wg}(4)=&  \:  - \phi \:  (5n+1)\notag
\end{align}
\begin{equation}
\label{CoeffPhi}
\mbox{where}\qquad\phi= \frac{1}{(n-4)(n-3)(n-2)(n-1)n(n+1)(n+3)(n+5)}\enspace. 
\end{equation}
Substituting (\ref{ColSni}) in (\ref{heavy}) yields 
\begin{align}
\label{MyFormulaforCVq}
\mathbb{E}({\tt CV}^q) = &  \sum_{\sigma\in\mathcal{P}_{2q}}
\sum_{\tau\in \mathcal{P}_{2q}} \mbox{Wg}(\ell(\sigma\vee \tau)) \sum_{{\bm \alpha}\in A_q}\delta_{\sigma}({\bm \alpha}{\bm \alpha})\:  \lambda_{\bm \alpha}\sum_{{\bm \beta}\in A_q}\delta_{\tau}({\bm \beta}{\bm \beta})\: 
 \mu_{\bm \beta}
 \notag  \\
   = &    \sum_{\sigma\in\mathcal{P}_{2q}}\sum_{\tau\in \mathcal{P}_{2q}} \mbox{Wg}(\ell(\sigma\vee \tau))\: \mbox{\rm Tr}_\sigma(\mathbf{K}_X)\: \mbox{\rm Tr}_\tau(\mathbf{K}_Y)\\
   = &     \sum_{\sigma\in\mathcal{P}_{2q}}\sum_{\tau\in \mathcal{P}_{2q}} {\scriptstyle (n-1)^{N(\sigma\vee\sigma_0)+N(\tau\vee\sigma_0)}}\: \:  \mbox{Wg}(\ell(\sigma\vee \tau)) \prod_{c=1}^{N(\sigma\vee\sigma_0)}\overline{{\bm \lambda}^{l_c}}\: \prod_{\tilde{c}=1}^{N(\tau\vee\sigma_0)}\overline{{\bm \mu}^{l_{\tilde{c}}}} \notag
 \end{align}
where the following lemmas and definitions have been used : 
\begin{lemma}[another variant of joint orthogonality, proved in the Appendix]
\label{doublesum}
\begin{displaymath}
\sum_{{\mathbf i}\in [n]^{2q}}v_{{\mathbf i}{\bm \omega}}
\sum_{{\mathbf a}\in A_{2q}}\delta_{\sigma}({\mathbf a})\, u_{{\mathbf i}{\mathbf a}}=\delta_{\sigma}({\bm \omega})
\end{displaymath}
\end{lemma}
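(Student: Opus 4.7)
The plan is to peel the identity apart in two layers: first collapse the sum over $\mathbf{a}\in A_{2q}$ using the multi-Kronecker symbol $\delta_\sigma(\mathbf{a})$, then collapse the sum over $\mathbf{i}\in[n]^{2q}$ using the joint orthogonality relations from (\ref{Knontrivial}), namely $\mathbf{U}\mathbf{U}^\top=\mathbf{I}_n-\sqrt{\mathbf{f}}\sqrt{\mathbf{f}}^\top$, $\mathbf{V}^\top\mathbf{V}=\mathbf{I}_{n-1}$ and $\mathbf{V}^\top\sqrt{\mathbf{f}}=\mathbf{0}_n$. The pairing structure of $\sigma$ is what makes both collapses fully factorizable.

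For the first step, I would observe that $\delta_\sigma(\mathbf{a})=1$ iff $a_s=a_t$ for each pair $(s,t)\in\sigma$. Since $\sigma$ is a pairing of $\{1,\ldots,2q\}$, the $2q$ indices of $\mathbf{a}$ collapse to $q$ independent free indices ranging over $[n-1]$, one per block. Hence
$$\sum_{\mathbf{a}\in A_{2q}}\delta_\sigma(\mathbf{a})\, u_{\mathbf{i}\mathbf{a}}=\prod_{(s,t)\in\sigma}\sum_{a=1}^{n-1}u_{i_s a}u_{i_t a}=\prod_{(s,t)\in\sigma}(\mathbf{U}\mathbf{U}^\top)_{i_s i_t}=\prod_{(s,t)\in\sigma}\bigl(\delta_{i_s i_t}-\sqrt{f_{i_s}f_{i_t}}\bigr).$$

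For the second step, the same pairing structure of $\sigma$ partitions the positions $1,\ldots,2q$ into disjoint pairs, so the outer sum over $\mathbf{i}$ factorizes as a product over $(s,t)\in\sigma$ of independent sums. Each factor evaluates to
$$\sum_{i,j=1}^n v_{i\omega_s}v_{j\omega_t}\bigl(\delta_{ij}-\sqrt{f_i f_j}\bigr)=(\mathbf{V}^\top\mathbf{V})_{\omega_s\omega_t}-\bigl(\mathbf{V}^\top\sqrt{\mathbf{f}}\bigr)_{\omega_s}\bigl(\mathbf{V}^\top\sqrt{\mathbf{f}}\bigr)_{\omega_t}=\delta_{\omega_s\omega_t}-0=\delta_{\omega_s\omega_t},$$
by the joint orthogonality identities. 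Taking the product of these per-pair contributions over all $(s,t)\in\sigma$ yields exactly $\prod_{(s,t)\in\sigma}\delta_{\omega_s\omega_t}=\delta_\sigma(\bm{\omega})$, which is the claim.

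There is no real obstacle: the lemma is a clean repackaging of the joint orthogonality property already used to establish double stochasticity in (\ref{DoublySto}), generalized from a single pair to an arbitrary pairing $\sigma$. The only point requiring care is the bookkeeping — namely, checking that the blocks of $\sigma$ genuinely decouple both sums so that the $\mathbf{U}\mathbf{U}^\top$ and $\mathbf{V}^\top\sqrt{\mathbf{f}}$ identities can be applied pair-by-pair, which is precisely what the pairing assumption guarantees.
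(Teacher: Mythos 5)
Your proof is correct and follows essentially the same route as the paper's: collapse the sum over $\mathbf{a}$ pair-by-pair via $\delta_\sigma(\mathbf{a})$ to get $\prod_{(s,t)\in\sigma}(\delta_{i_s i_t}-\sqrt{f_{i_s}f_{i_t}})$, then collapse the sum over $\mathbf{i}$ using the joint orthogonality $\mathbf{V}^\top\mathbf{U}\mathbf{U}^\top\mathbf{V}=\mathbf{I}_{n-1}$ (which you merely unpack into $\mathbf{V}^\top\mathbf{V}=\mathbf{I}_{n-1}$ and $\mathbf{V}^\top\sqrt{\mathbf{f}}=\mathbf{0}$). No substantive difference from the paper's argument.
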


\begin{lemma}\label{Ksigma}
Consider the {\em reference pairing} $\sigma_0=(12|34|\ldots|2q-1,q)\in\mathcal{P}_{2q}$, and consider the type  $\ell(\sigma\vee \sigma_0)$, also called {\em coset-type} of $\sigma$ \citep[see e.g.][]{matsumoto2012general, collins2013compound}. Define
\begin{equation}
\label{CosetTrace}
\mbox{\rm Tr}_\sigma(\mathbf{K})=\prod_{c=1}^{N(\sigma\vee\sigma_0)}\mbox{\rm Tr}(\mathbf{K}^{l_c})
\qquad\qquad
\mbox{\rm tr}_\sigma(\mathbf{K})=\prod_{c=1}^{N(\sigma\vee\sigma_0)}\mbox{\rm tr}(\mathbf{K}^{l_c})
\end{equation}
Then $\sum_{{\bm \alpha}\in A_q}\delta_{\sigma}({\bm \alpha}{\bm \alpha})\:  \lambda_{\bm \alpha}=\mbox{\rm Tr}_\sigma(\mathbf{K}_X)$, which also reads 
\begin{equation}
\label{ }
\mbox{\rm Tr}_\sigma(\mathbf{K}_X)=(n-1)^{N(\sigma\vee\sigma_0)}\; \mbox{\rm tr}_\sigma(\mathbf{K}_X)
=(n-1)^{N(\sigma\vee\sigma_0)}\; \prod_{c=1}^{N(\sigma\vee\sigma_0)}\overline{{\bm \lambda}^{l_c}}\enspace.
\end{equation}
\end{lemma}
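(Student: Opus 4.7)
The plan is to exploit the 2-regular multigraph structure of $\sigma\cup\sigma_0$ on the vertex set $\{1,2,\ldots,2q\}$. First I would observe that each vertex has exactly degree two, one edge from the pairing $\sigma$ and one from the reference pairing $\sigma_0$, so the graph decomposes into disjoint cycles in which the two pairings must alternate. Consequently each cycle has even length $2l_c$, contains $l_c$ edges of each type, and corresponds to a single block of $\sigma\vee\sigma_0$ of size $2l_c$ containing exactly $l_c$ of the $\sigma_0$-pairs $\{2r-1,2r\}$. The multiset of block sizes $(2l_1,2l_2,\ldots)$ recovers the coset-type $\ell(\sigma\vee\sigma_0)=(l_1,l_2,\ldots)$ and the count $N(\sigma\vee\sigma_0)$.

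Next I would translate the Kronecker indicator $\delta_\sigma(\bm\alpha\bm\alpha)$ into constraints on $\bm\alpha$. Since positions $2r-1$ and $2r$ of the doubled multi-index both carry $\alpha_r$, we have $\delta_{\sigma_0}(\bm\alpha\bm\alpha)=1$ automatically, so combining the $\sigma_0$- and $\sigma$-equalities forces all positions inside a common block of $\sigma\vee\sigma_0$ to carry the same value. Projected back on the $q$-tuple $\bm\alpha$, this says that the $l_c$ indices $r$ whose pair $\{2r-1,2r\}$ is contained in block $B_c$ must share a single common value $\alpha^{(c)}\in\{1,\ldots,n-1\}$, while values attached to distinct blocks are unconstrained.

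The sum then factorizes over the blocks: each block $B_c$ contributes $\sum_{\alpha=1}^{n-1}\lambda_\alpha^{l_c}=\mbox{Tr}(\mathbf{K}_X^{l_c})$, because the common value $\alpha^{(c)}$ occurs with multiplicity $l_c$ in $\lambda_{\bm\alpha}$. Taking the product across the $N(\sigma\vee\sigma_0)$ blocks gives exactly $\mbox{Tr}_\sigma(\mathbf{K}_X)$ as defined in (\ref{CosetTrace}). The second identity in the statement then follows by rewriting each factor $\mbox{Tr}(\mathbf{K}_X^{l_c})=(n-1)\,\overline{\bm\lambda^{l_c}}$ via the normalized-trace convention of (\ref{spmocespmo}), and pulling one factor of $n-1$ out of each of the $N(\sigma\vee\sigma_0)$ terms.

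The only genuinely non-routine step is the two-coloured cycle argument that guarantees even block sizes $2l_c$ and the exact count of $\sigma_0$-pairs per block; the remainder is just bookkeeping of an independent factorization. I do not foresee any real obstacle, although care is needed to make the correspondence between block indices and retained $\alpha_r$'s fully explicit so that the factor $\lambda_\alpha^{l_c}$ appears with the correct exponent.
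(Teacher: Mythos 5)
Your proposal is correct and follows essentially the same route as the paper's proof: observe that $\delta_{\sigma_0}({\bm\alpha}{\bm\alpha})=1$ automatically, combine the $\sigma$- and $\sigma_0$-constraints to force constancy of the indices on each block of $\sigma\vee\sigma_0$, and factorize the sum into one free index per block repeated $l_c$ times, giving $\prod_c\mbox{Tr}(\mathbf{K}_X^{l_c})$. Your explicit two-coloured alternating-cycle argument for why the blocks have even sizes $2l_c$ is a welcome elaboration of a fact the paper takes as part of the definition of the coset-type, but it does not change the substance of the argument.
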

\begin{proof}
Consider  ${\bm \omega}={\bm \alpha}{\bm \alpha}\in A_{2q}$. By construction, $\omega_{(2r-1)}=\omega_{2r}$, that is $\delta_{\sigma_0}({\bm \omega})=1$. On the other hand, the term $\delta_{\sigma}({\bm \alpha}{\bm \alpha})$ imposes $\omega_{\sigma(2r-1)}=\omega_{\sigma(2r)}$. Hence all indices of $\bm \omega$  in the blocks of $\sigma\vee\sigma_0$ (of sizes 
$2l_1,2l_2,2l_3\ldots,2l_{N(\sigma\vee\sigma_0)}$) are identical, that is the sum on ${\bm \alpha}$ involves $N(\sigma\vee\sigma_0)$ unconstrained indices respectively repeated  exactly  $(l_1,l_2,l_3\ldots,l_{N(\sigma\vee\sigma_0)})$ times.
\end{proof}

Transforming    expression (\ref{MyFormulaforCVq}) into an effective formula requires to determine, among the $((2q-1)!!)^2$  pairings $(\sigma,\tau)$ entering into the sum, how many are jointly of type $\ell(\sigma\vee\tau)$, $\ell(\sigma\vee\sigma_0)$ and $\ell(\tau\vee\sigma_0)$. 

For $q=4$, Table \ref{FiveTables_q4} gives the distribution of joint counts of the $105^2=11025$ pairings $(\sigma,\tau)$, among the $5^3=125$ possible trivariate types. Those counts have,  for lack of foreseeable analytical approach, 
been mechanically computed with  the help of the {\tt R} package {\tt igraph} \citep{csardi2006igraph}, by the functions {\tt union()} (determining the join of two pairings coded as binary graphs) and {\tt components()} (determining the join type).

\begin{table}
  \caption{Trivariate type counts for $q=4$: each table refers to the type of $\sigma\vee\tau$, namely $(1^4)$, $(2,1,1)$, $(2,2)$, $(3,1)$, $(4)$ from left to right and  top to bottom. Rows refer to the type of $\sigma\vee\sigma_0$,  and columns to the type of $\tau\vee\sigma_0$.}
  \label{FiveTables_q4}
  \begin{center}
\includegraphics[width=0.32\textwidth]{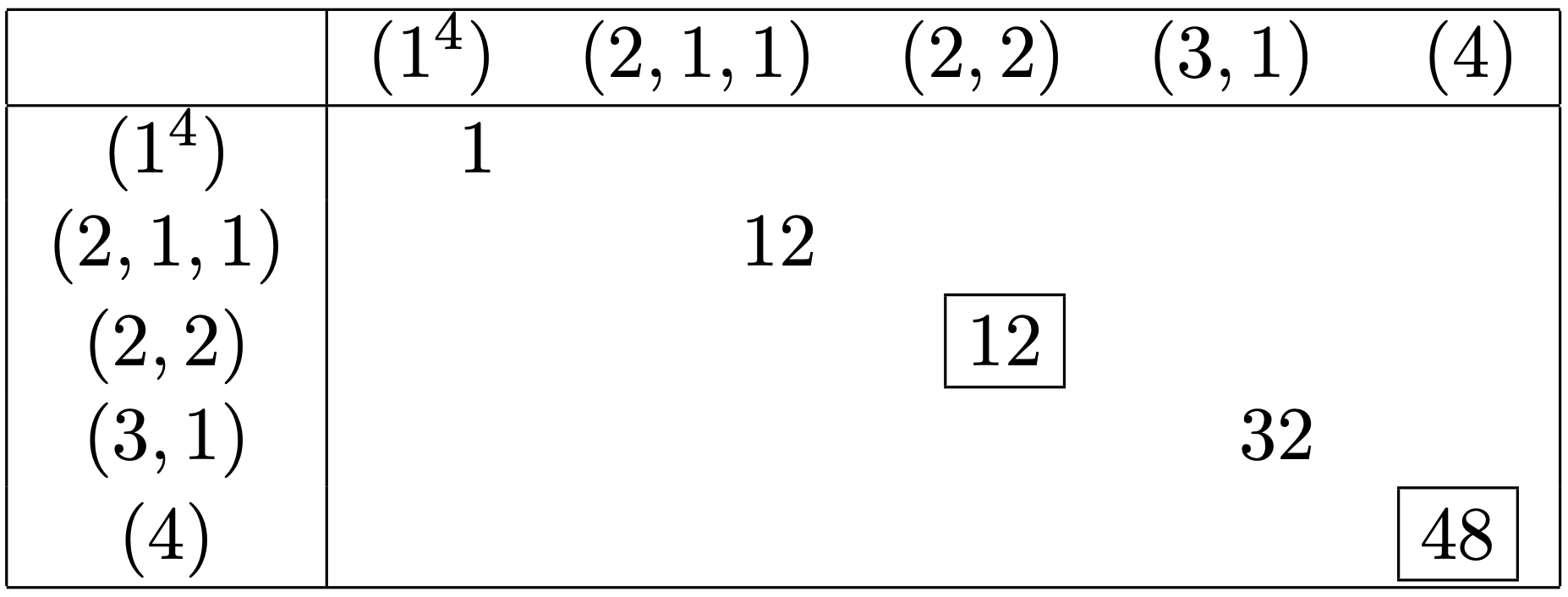}  
\hspace{0cm}
\includegraphics[width=0.32\textwidth]{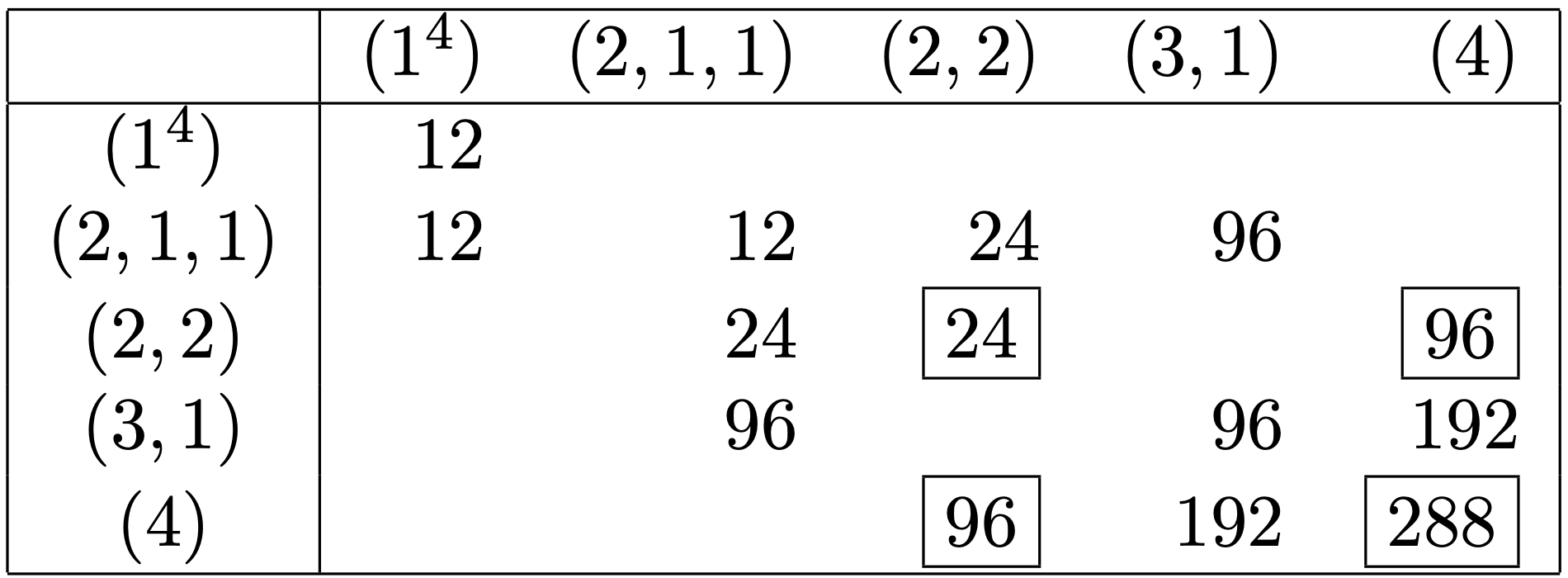}
\hspace{0cm}
\includegraphics[width=0.32\textwidth]{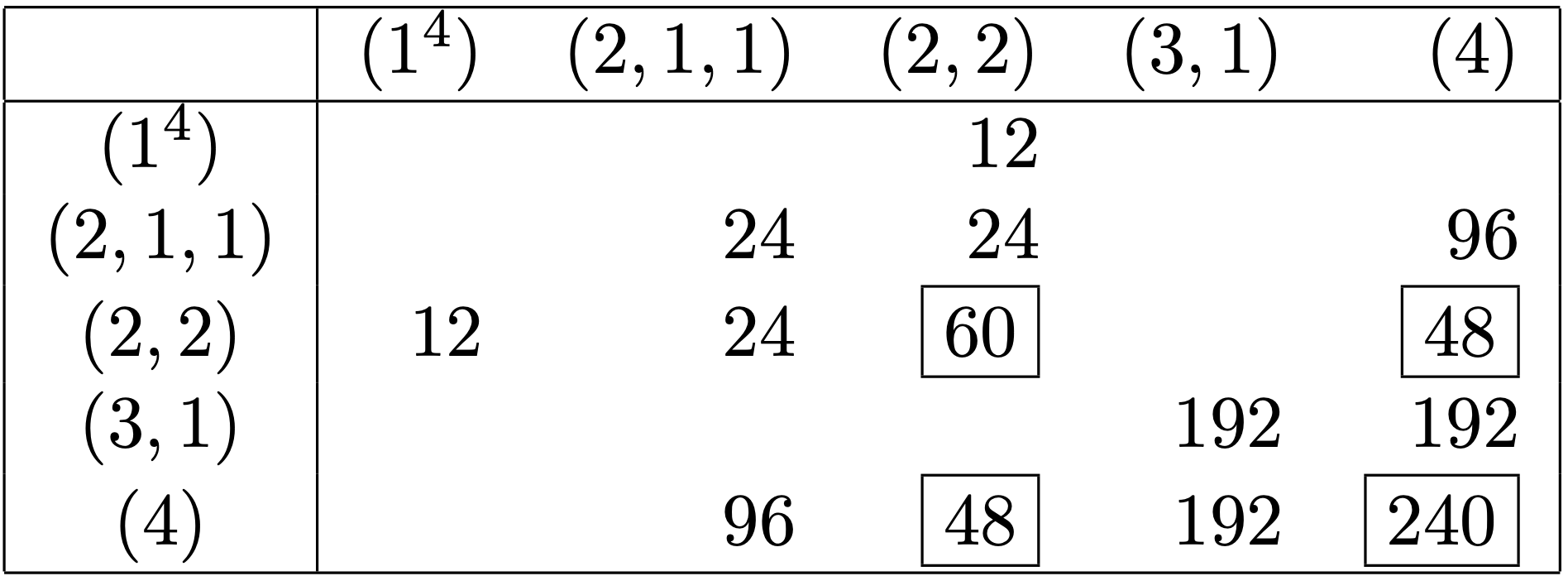}  
\end{center}

\begin{center}
\includegraphics[width=0.32\textwidth]{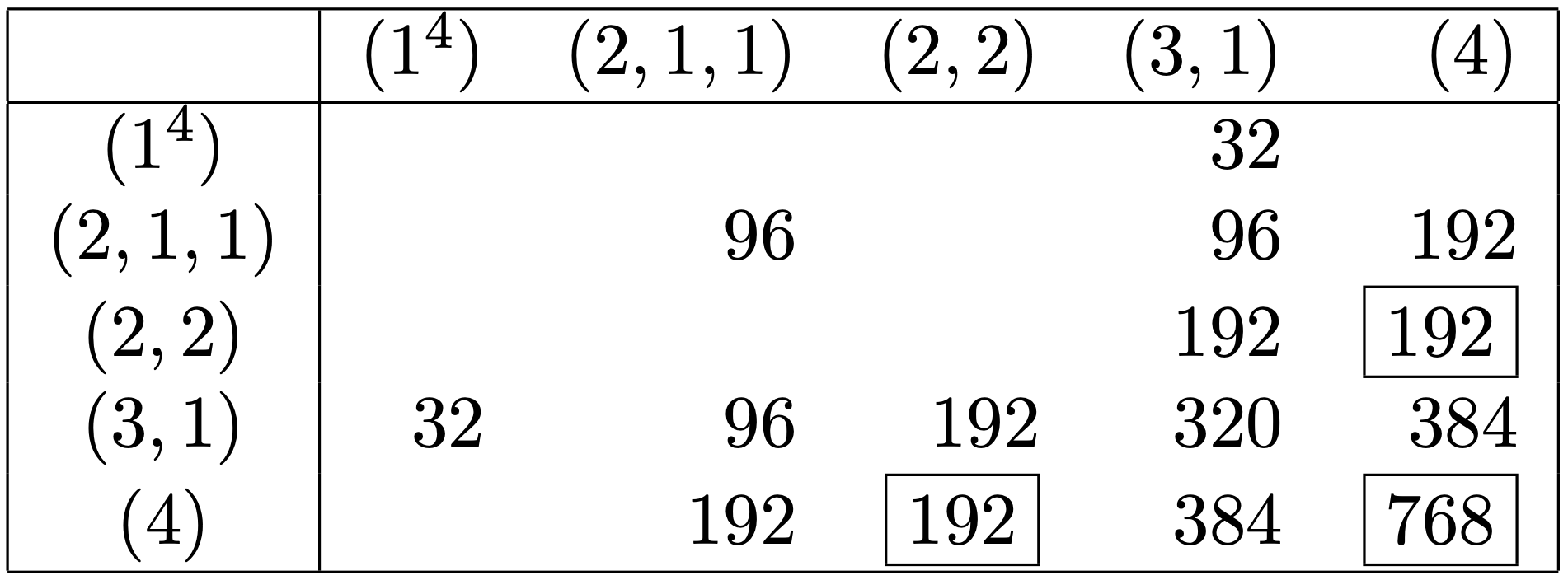}  
\hspace{0cm}
\includegraphics[width=0.32\textwidth]{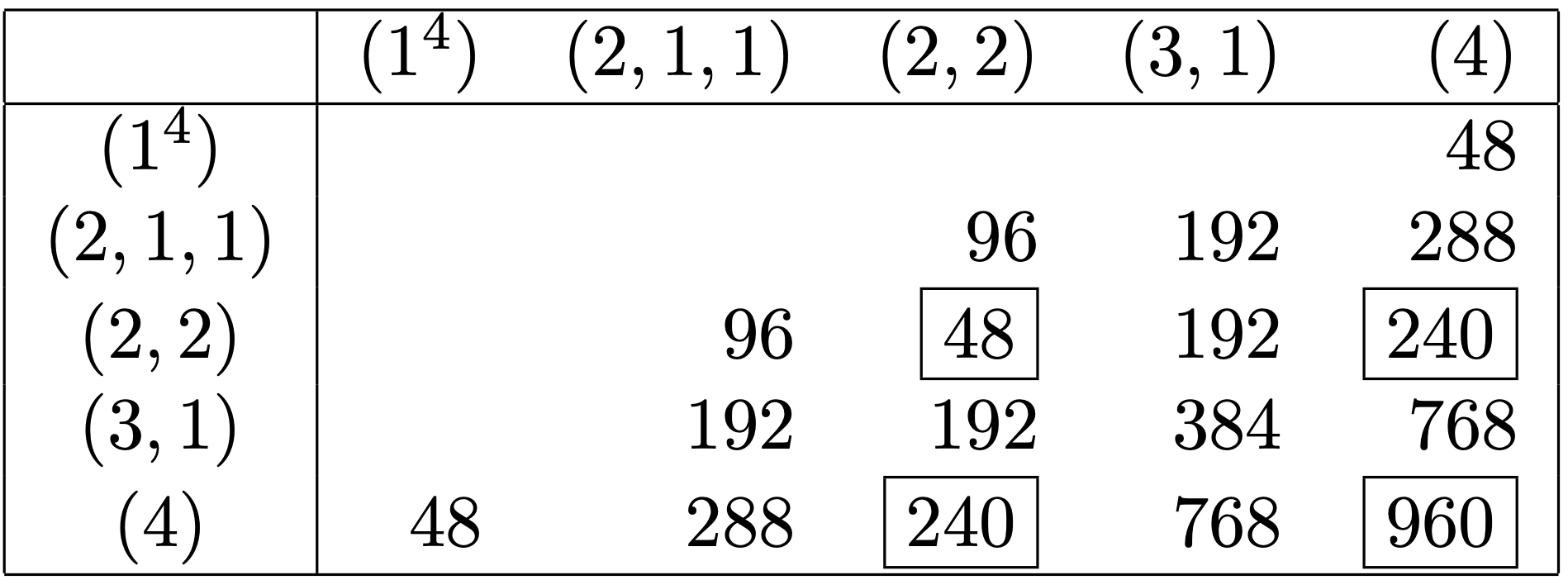}
\end{center}
\end{table}

Working with {\em centered quantities} notably simplifies the computations: 
\begin{lemma}
\label{centered}
\begin{displaymath}
{\tt CV}_c= {\tt CV}-\mathbb{E}({\tt CV})
=\sum_{\alpha=1}^{n-1}\sum_{\beta=1}^{n-1} \lambda^c_{\alpha}\mu^c_{\beta} P_{\alpha\beta}\enspace. 
\end{displaymath}
\end{lemma}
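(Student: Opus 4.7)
The plan is to exploit the doubly-stochastic property of $\mathbf{P}(\mathbf{T})$ established in equation (\ref{DoublySto}): for every orthogonal matrix $\mathbf{T}$, the row and column sums of $P_{\alpha\beta}(\mathbf{T})$ equal one. Crucially, this is a pointwise identity in $\mathbf{T}$, not just an expectation, so it may be used inside the formula (\ref{CVT}) for ${\tt CV}(\mathbf{T})$ itself.

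First I would expand the claimed right-hand side by distributing the product $(\lambda_\alpha - \lo)(\mu_\beta - \mo)$:
\begin{equation*}
\sum_{\alpha,\beta=1}^{n-1}(\lambda_\alpha - \lo)(\mu_\beta - \mo)\, P_{\alpha\beta}
= \sum_{\alpha,\beta}\lambda_\alpha \mu_\beta P_{\alpha\beta}
- \mo\sum_{\alpha,\beta}\lambda_\alpha P_{\alpha\beta}
- \lo\sum_{\alpha,\beta}\mu_\beta P_{\alpha\beta}
+ \lo\,\mo\sum_{\alpha,\beta} P_{\alpha\beta}.
\end{equation*}
The first sum is exactly ${\tt CV}$ by (\ref{CVT}). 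Using $P_{\alpha\bullet}=1$ the second sum collapses to $\sum_{\alpha}\lambda_\alpha = (n-1)\lo$, and using $P_{\bullet\beta}=1$ the third sum becomes $(n-1)\mo$; finally the fourth sum equals $n-1$. Collecting terms yields
\begin{equation*}
\sum_{\alpha,\beta}(\lambda_\alpha - \lo)(\mu_\beta - \mo)\, P_{\alpha\beta}
= {\tt CV} - (n-1)\lo\,\mo.
\end{equation*}

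The final step is to identify $(n-1)\lo\,\mo$ with $\mathbb{E}({\tt CV})$, which is already given by (\ref{4MainTheo1}) in Theorem \ref{CVtheo}. This yields exactly ${\tt CV} - \mathbb{E}({\tt CV}) = {\tt CV}_c$, as claimed.

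There is essentially no obstacle: the proof is a one-line manipulation once one recognizes that the doubly-stochastic structure of $\mathbf{P}(\mathbf{T})$ is a deterministic property for each individual $\mathbf{T}$, so centering the spectra can be performed inside the quadratic form before integration. The usefulness of the lemma lies precisely in this pointwise character, which will allow the Weingarten calculation of $\mathbb{E}({\tt CV}_c^4)$ in the sequel to use $\lambda_c, \mu_c$ in place of $\lambda, \mu$, automatically annihilating all pairings whose coset-type contains a block of length one.
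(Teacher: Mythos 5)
Your proof is correct and follows essentially the same route as the paper's: expand $(\lambda_\alpha-\lo)(\mu_\beta-\mo)$, collapse the cross terms via the pointwise double-stochasticity $P_{\alpha\bullet}=P_{\bullet\beta}=1$, and identify the remaining constant $(n-1)\,\lo\,\mo$ with $\mathbb{E}({\tt CV})$ from (\ref{4MainTheo1}). Your closing remark on why the pointwise (rather than in-expectation) character of the identity matters for the subsequent Weingarten computation is a correct and useful observation, though not part of the proof itself.
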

\begin{proof}
By (\ref{4MainTheo1}), (\ref{CVdef1}) and $P_{\alpha\bullet}=P_{\bullet\beta}=1$, 
\begin{align*}
\hspace*{-1cm}&\sum_{\alpha=1}^{n-1}\sum_{\beta=1}^{n-1} \lambda^c_{\alpha}\mu^c_{\beta} P_{\alpha\beta}=
\sum_{\alpha\beta}(\lambda_\alpha-\lo)(\mu_\beta-\mo)P_{\alpha\beta}   = 
\sum_{\alpha\beta} \lambda_\alpha \mu_\beta P_{\alpha\beta}-(n-1)\lo\mo \notag\\
    &   -(n-1)\lo\mo +(n-1)\lo\mo= \sum_{\alpha\beta} \lambda_\alpha \mu_\beta P_{\alpha\beta}-(n-1)\lo\, \mo= {\tt CV}-\mathbb{E}({\tt CV})={\tt CV}_c\enspace. 
\end{align*}$\mbox{}$
\end{proof}
Consequently, (\ref{MyFormulaforCVq}) entails
\begin{equation}
\label{MyFormCentered}
\mathbb{E}({\tt CV}_c^q) =  \sum_{\sigma\in\mathcal{P}_{2q}}\sum_{\tau\in \mathcal{P}_{2q}} {\scriptstyle (n-1)^{N(\sigma\vee\sigma_0)+N(\tau\vee\sigma_0)}}\: \:  \mbox{Wg}(\ell(\sigma\vee \tau)) \prod_{c=1}^{N(\sigma\vee\sigma_0)}\overline{{\bm \lambda}_c^{l_c}}\: \prod_{\tilde{c}=1}^{N(\tau\vee\sigma_0)}\overline{{\bm \mu}_c^{l_{\tilde{c}}}}
\end{equation}
in which, for $q=4$,  the contributions of $\sigma$ and $\tau$ coset  types $(1^4)$, $(2,1,1)$ and $(3,1)$, associated to  $\overline{{\bm \lambda}_c}=0$ or $\overline{{\bm \mu}_c}=0$, are zeroed:  only $(2,2)$ and $(4)$ survive, with contributions indicated by the boxed counts in Table~\ref{FiveTables_q4}. Explicitly, the coefficient of $ \ldc\: \ldc\:  \mdc\:  \mdc$ in  (\ref{MyFormCentered}) is
\begin{align*}
    &   (n-1)^4 [12\, \mbox{Wg}(1^4)+24\,  \mbox{Wg}(2,1,1)+60\,   \mbox{Wg}(2,2)+48\, \mbox{Wg}(4) ] \\
= \:  & 12\, \phi\, (n-1)^4\, (n^4+n^3-15 n^2-13 n+98) \enspace, 
\end{align*}
the coefficient of $\lqc\: \mqc$  is
\begin{align*}
    &   (n-1)^2 [48\, \mbox{Wg}(1^4)+288\,  \mbox{Wg}(2,1,1)+240\,   \mbox{Wg}(2,2)+768 \, \mbox{Wg}(3,1)+960\, \mbox{Wg}(4) ] \\
=  \:   &  48\, \phi\, (n-1)^4\, (n^2-n+2)  \enspace, 
\end{align*}
and the coefficient of $ \lqc\:  \mdc\: \mdc$ and $\ldc\: \ldc\:  \mqc$ is
\begin{align*}
    &   (n-1)^3 [96\,  \mbox{Wg}(2,1,1)+48\,   \mbox{Wg}(2,2)+192 \, \mbox{Wg}(3,1)+240\, \mbox{Wg}(4) ] \\
 =   \:   &  -48\, \phi\, (n-1)^4\, (2n^2-n-7)   \enspace. 
\end{align*}
The final expressions in the above follow from (\ref{WeinValues})  and, together with (\ref{CoeffPhi}), prove  (\ref{4MainTheo4}). They have been further checked with the software {\em Mathematica}. Expression (\ref{4MainTheo4}) for the fourth moment is  relatively simple, but it lacks elegance  and  direct interpretation.

\subsection{The third moment, revisited}
\label{q3rev}
Applying the steps of previous section  for $q=3$  reveals that the contribution of coset-types $(1,1,1)$ and $(2,1)$ for $\sigma$ or $\tau$ is zero by consequence of centration. Hence, only the coset-types $(3)$ contribute  to (\ref{MyFormCentered}), which is therefore simply {\em proportional} to $\ltc\, \mtc$.  The conciseness of expressions (\ref{4MainTheo3}) and (\ref{4MainCoroo3}) is thus elucidated. The proportionality coefficient is determined by the boxed components of Table \ref{ThreeTables_q3} as 
\begin{equation}
\label{coeffQ3}
(n-1)^2 [8\, \mbox{Wg}(1^3)+24\,  \mbox{Wg}(2,1)+32\, \mbox{Wg}(3)] 
= \frac{8(n-1)^3}{(n-3)(n-2)(n+1)(n+3)} 
\end{equation}
which is exactly expression (\ref{4MainTheo3}), obtained much more indirectly in section \ref{q3}. The values of the Weingarten coefficients in   (\ref{coeffQ3}) were 
obtained from \citet{collins2006integration}. They read with the present notations ($d=n-1$ and  (\ref{kappahat}))  as
\begin{displaymath}
\mbox{Wg}(1^3)=    \hat{\kappa} \: (n^2+n-4) \qquad\qquad \mbox{Wg}(2,1)=     -\hat{\kappa} \:  (n+1)\qquad\qquad \mbox{Wg}(3)=2\hat{\kappa}\enspace. 
\end{displaymath}
Those coefficients coincide, in order, with the values $U$, $V$ and $W$ defined in (\ref{seize}) and determined in (\ref{seizesolPRE}), as they must in view of (\ref{ColSni}). 

In conclusion, the pedestrian approach of sections \ref{Computing low-order}, \ref{q12} and \ref{q3} exactly matches the systematic approach of sections \ref{q4} and \ref{q3rev}: a circumstance both expected and relieving, apt to boost confidence in the soundness of the invariant orthogonal integration approach.

\begin{table}
  \caption{Trivariate type counts for $q=3$: each table refers to the type of $\sigma\vee\tau$, namely $(1^3)$, $(2,1)$, $(3)$ from left to right. Rows refer to the type of $\sigma\vee\sigma_0$,  and columns to the type of $\tau\vee\sigma_0$.}
  \label{ThreeTables_q3}
  \begin{center}
\includegraphics[width=0.32\textwidth]{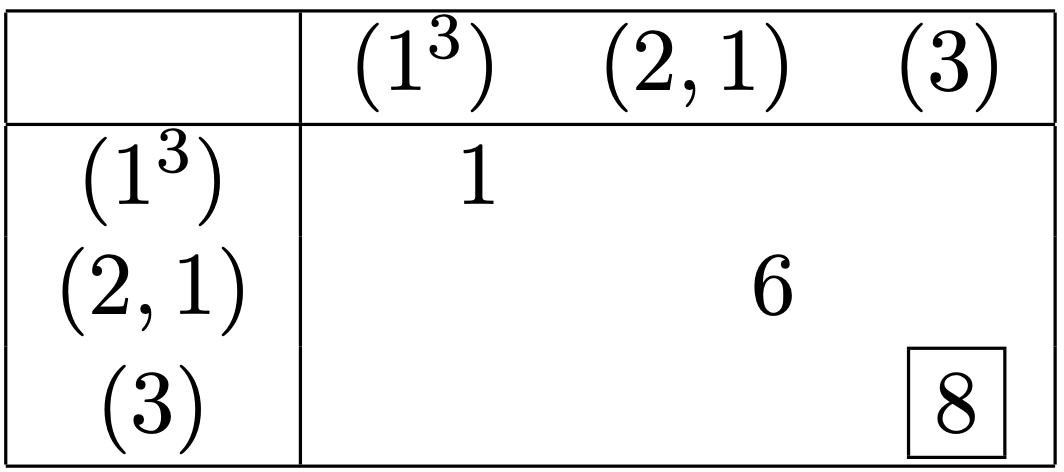}  
\hspace{0cm}
\includegraphics[width=0.32\textwidth]{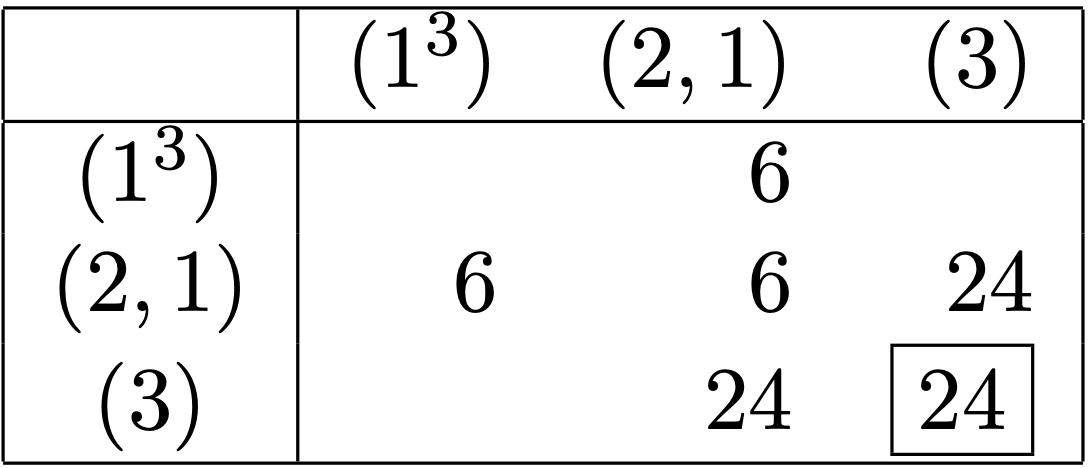}
\hspace{0cm}
\includegraphics[width=0.32\textwidth]{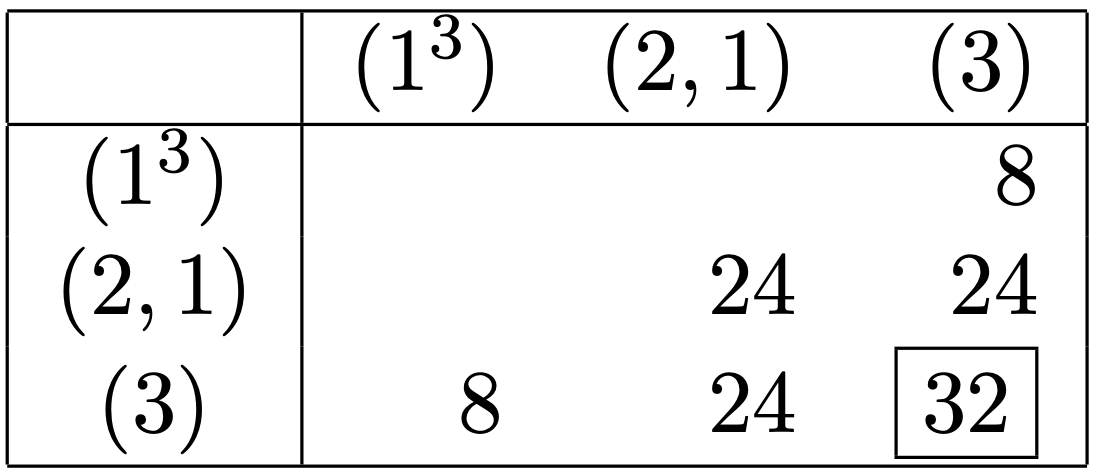}  
\end{center}
\end{table}

\section{Discussion and conclusion}
The weighted RV coefficient measures the similarity between two weighted Euclidean configurations, and this contribution proposes exact expressions for the first four moments of the RV. Considering  weighted objects extends the traditional uniform framework. It also provides  precious guidance for separating the trivial and non-trivial eigenspaces resulting form the spectral decomposition of the standard kernels occurring in the weighted multidimensional scaling of both configurations.

Our approach, invariant orthogonal integration, is nonparametric, and consists in averaging the relative orientation of both configurations by performing Haar integration 
 on orthogonal matrices $\mathbf{T}\in\mathbb{O}_{n-1}$ acting in the non-trivial eigenspace only. The resulting expressions are simpler and easier to interpret than their traditional counterparts obtained by averaging on permutation matrices $\mathbf{S}$ between $n$ objects.  In view of  $\mathbf{S}\mathbf{S}^\top=\mathbf{I}_n$,  permutations also do constitute rotations, but in  $\mathbb{O}_n$, and their undiscriminate use is furthermore questionable in the weighted setting. Comparing the present approach to the parametric approach, postulating multivariate normal distribution for the object features, is left open for future investigations. 
 
 Also, our approach is object-oriented, as in traditional Data Analysis and Machine Learning, rather than variable-oriented as in Mathematical Statistics. Its use requires to dispose of squared Euclidean dissimilarities between objets, possibly weighted, and some of its numerous applications (including spatial autocorrelation and network clustering) will be illustrated in forthcoming publications. This contribution underlines in particular the key role played by the {\em standard kernel}, central to weighted multidimensional scaling, and whose  spectrum governs the values of the RV moments. Correlatively, it appears that the humble {\em scree plot} should deserve more consideration, beyond its limited role in selecting the number of spectral dimensions: mentioning and interpreting its effective dimensionality, skewness and fourth spectral moment could arguably become systematic in practice.
 
 Computing the fourth RV moment did require to recourse to the Weingarten calculus, whose apparatus, arguably demanding for the neophyte, turned out decisive for the pursuit of our objective. One may reasonably hope that future developments along that line will enrich the present results,  replacing in particular the mechanical computation of Tables  \ref{FiveTables_q4} and \ref{ThreeTables_q3} by true mathematical arguments. However, determining the analytical, exact null distribution of ${\tt RV}$, may reveal itself out of reach:  as a matter of fact, the moment generating function (\ref{MomGneFun}) is an orthogonal analog of the celebrated Harish-Chandra {\em trace integral for the unitary group}, whose analytical expression has been determined ever since the fifties \citep{harish1957differential} (see also e.g. \citet{tao2013harish} and  \citet{mcswiggen2021harish}). Yet, discovering a corresponding expression for the orthogonal case, precisely, has not been achieved so far.


\begin{appendix}
\section*{Proofs}\label{appn} 

\begin{proof}[Proof of Lemma \ref{Mainlemma1}]
Let $\alpha\neq\gamma$ and consider the matrix $\tilde{\bm{T}}$ with components 
\begin{equation*}
\label{Givens}
\tilde{t}_{a\alpha}=\cos\xi\,  t_{a\alpha}-\sin\xi\,  t_{a\gamma}
\qquad
\quad
\tilde{t}_{a\gamma}=\sin\xi\,  t_{a\alpha}+\cos\xi\,  t_{a\gamma}
\qquad
\quad
\tilde{t}_{a\beta}=t_{a\beta}\quad\mbox{for } \beta\neq\alpha, \gamma
\end{equation*}
for any $a$, where $\xi$ is an arbitrary, fixed angle. Then $\tilde{\bm{T}}$ is an orthogonal matrix, as likely as $\bm{T}$, that is $d\mu(\tilde{\bm{T}})=d\mu(\bm{T})$. To ease the notations, take ${\bm \varepsilon}$ and $ {\bm e}$ in Lemmas \ref{Mainlemma1} and \ref{Mainlemma2}, playing no active role in what follows, as empty. Then
\begin{align*}
\label{}
\mathcal{I}_{abcd}^{\alpha\alpha\gamma\gamma} &=\mathbb{E}([\cos\xi\, t_{a\alpha}-\sin\xi\, t_{a\gamma}][\cos\xi\, t_{b\alpha}-\sin\xi\, t_{b\gamma}][\sin\xi\, t_{c\alpha}+\cos\xi\, t_{c\gamma}][\sin\xi\, t_{d\alpha}+\cos\xi\, t_{d\gamma}])      \\
    & = 2\cos^2\xi\sin^2\xi \mathcal{I}_{abcd}^{\alpha\alpha\alpha\alpha} + (\cos^4\xi+\sin^4\xi) \mathcal{I}_{abcd}^{\alpha\alpha\gamma\gamma}-2\cos^2\xi\sin^2\xi \mathcal{I}_{abcd}^{\alpha\gamma\alpha\gamma}-2\cos^2\xi\sin^2\xi \mathcal{I}_{abcd}^{\alpha\gamma\gamma\alpha}
\end{align*}
Multiplying the l.h.s. by $\cos^4\xi+\sin^4\xi+2\cos^2\xi\sin^2\xi =1$ and simplifying yields 
\begin{equation*}
\label{ }
 \mathcal{I}_{abcd}^{\alpha\alpha\alpha\alpha}=\mathcal{I}_{abcd}^{\alpha\alpha\gamma\gamma}+\mathcal{I}_{abcd}^{\alpha\gamma\alpha\gamma}+ \mathcal{I}_{abcd}^{\alpha\gamma\gamma\alpha}\enspace. 
\end{equation*}
\end{proof}

\begin{proof}[Proof of Lemma \ref{ThirdOrderP}]
Lemmas (\ref{Mainlemma1}) and (\ref{Mainlemma2}) entail the following relations between orthogonal coefficients (\ref{seize})
\begin{align*}
S&\stackrel{(\ref{lemme1})}{=}U+2V   &   T&\stackrel{(\ref{lemme1})}{=}V+2W &  M&\stackrel{(\ref{lemme2})}{=}3N &  P&\stackrel{(\ref{lemme2})}{=}3S   \notag\\
R&\stackrel{(\ref{lemme2})}{=}3T  &   E&\stackrel{(\ref{lemme3})}{=}(n-2)M+L &  E&\stackrel{(\ref{lemme3})}{=}(n-2)P+M &  F&\stackrel{(\ref{lemme3})}{=}(n-3)N+2M  \notag\\
F&\stackrel{(\ref{lemme3})}{=}(n-3)S+2Q  &  F&\stackrel{(\ref{lemme3})}{=}(n-2)Q+M  & G&\stackrel{(\ref{lemme3})}{=}(n-3)U+2S  &  H&\stackrel{(\ref{lemme3})}{=}(n-3)V+2T   \notag  \\
0&\stackrel{(\ref{lemme3})}{=}(n-2)R+M  &  0&\stackrel{(\ref{lemme3})}{=}(n-2)T+N   & 0&\stackrel{(\ref{lemme3})}{=}(n-3)W+2T &   &
\notag
\end{align*}
with solution (recall that $E,F,G,H$ in (\ref{solkappa}) are already known) 
\begin{equation}
\label{seizesolPRE}
\begin{split}    
L&=\frac{15(n-2)}{n+3}\,\kappa         \qquad   M =\frac{3(n-2)}{n+3}\,\kappa     \qquad   N =\frac{n-2}{n+3}\,\kappa     \qquad   P =\frac{3(n+2)}{n+3}\,\kappa 
\\
Q&=\frac{n}{n+3}\,\kappa   \qquad      R =\frac{-3}{n+3}\,\kappa   \qquad   S =\frac{n+2}{n+3}\,\kappa \qquad    T =\frac{-1}{n+3}\,\kappa 
 \\
 U&=\frac{n^2+n-4}{(n-3)(n+3)}\,\kappa  \qquad       V =\frac{-(n+1)}{(n-3)(n+3)}\,\kappa     \qquad      W =\frac{2}{(n-3)(n+3)}\,\kappa \enspace . \end{split}
\end{equation}

Consider first $\alpha=\gamma=\varepsilon$, and assume the sub-indices  of the orthogonal coefficients to be matched into three distinct pairs. There are
$5\times 3=15$ such pairings, namely 
\begin{equation}
\label{age0}
\begin{split}
\mathcal{I}_{abcdef}^{\alpha\alpha\alpha\alpha\alpha\alpha} & = N
\{{\scriptstyle\delta_{ab}\delta_{cd}\delta_{ef}
+\delta_{ac}\delta_{bd}\delta_{ef}
+\delta_{ad}\delta_{bc}\delta_{ef}
+\delta_{ae}\delta_{cd}\delta_{bf}
+\delta_{af}\delta_{cd}\delta_{be}
+\delta_{ab}\delta_{ce}\delta_{df}
+\delta_{ab}\delta_{cf}\delta_{de}
}
\\
+&{\scriptstyle\delta_{ac}\delta_{be}\delta_{df}
+\delta_{ac}\delta_{bf}\delta_{de}
+\delta_{ad}\delta_{ce}\delta_{bf}
+\delta_{ad}\delta_{cf}\delta_{be}
+\delta_{ae}\delta_{bc}\delta_{df}
+\delta_{ae}\delta_{bd}\delta_{cf}
+\delta_{af}\delta_{bc}\delta_{de}
+\delta_{af}\delta_{bd}\delta_{ce}}\} \enspace. 
\end{split}
\end{equation}
In (\ref{age0}), the first term preserves the three pairs in the {\em reference partition} $(ab|cd|ef)$, the next six terms preserve one pair only, and the eight remaining  terms mix all pairs. It turns out that (\ref{age0}) also holds for coinciding pairs in view of $M=3N$ and $L=5M$. By joint orthogonality, the sum in (\ref{CiJ}) reads
\begin{equation}
\label{triple1}
\mathbb{E}(P_{\alpha\beta}P_{\alpha\delta}P_{\alpha\zeta})
=(1+2\delta_{\beta\delta}+2\delta_{\beta\zeta}+2\delta_{\delta\zeta}+8\delta_{\beta\delta}\delta_{\beta\zeta}\delta_{\delta\zeta})N\enspace .
\end{equation}

Consider now $\alpha=\gamma\neq\varepsilon$. Distinguishing between cases preserving or not the pair $(ef)$  yields
\begin{equation}
\label{agNe0}
\begin{split}
\mathcal{I}_{abcdef}^{\alpha\alpha\alpha\alpha\varepsilon\varepsilon} & = S
\{{\scriptstyle
\delta_{ab}\delta_{cd}\delta_{ef}
+\delta_{ac}\delta_{bd}\delta_{ef}
+\delta_{ad}\delta_{bc}\delta_{ef}
}\} 
+T\{{\scriptstyle\delta_{ae}\delta_{cd}\delta_{bf}
+\delta_{af}\delta_{cd}\delta_{be}
+\delta_{ab}\delta_{ce}\delta_{df}
+\delta_{ab}\delta_{cf}\delta_{de}
}
\\
&
{\scriptstyle
+\delta_{ac}\delta_{be}\delta_{df}
+\delta_{ac}\delta_{bf}\delta_{de}
+\delta_{ad}\delta_{ce}\delta_{bf}
+\delta_{ad}\delta_{cf}\delta_{be}
+\delta_{ae}\delta_{bc}\delta_{df}
+\delta_{ae}\delta_{bd}\delta_{cf}
+\delta_{af}\delta_{bc}\delta_{de}
+\delta_{af}\delta_{bd}\delta_{ce}} \} 
\end{split}
\end{equation}
which also holds for three preserved pairs since $3S+12T=M$. By joint orthogonality, (\ref{CiJ}) reads
\begin{align}
\mbox{for $\alpha=\gamma\neq\varepsilon$}\qquad  \mathbb{E}(P_{\alpha\beta}P_{\gamma\delta}P_{\varepsilon\zeta})
&=(1+2\delta_{\beta\delta})S+(2\delta_{\beta\zeta}+2\delta_{\delta\zeta}+8\delta_{\beta\delta}\delta_{\beta\zeta}\delta_{\delta\zeta})T
\label{triple2eps}
\\
\mbox{for $\alpha=\varepsilon\neq\gamma$}\qquad \mathbb{E}(P_{\alpha\beta}P_{\gamma\delta}P_{\varepsilon\zeta})
&=(1+2\delta_{\beta\zeta})S+(2\delta_{\beta\delta}+2\delta_{\delta\zeta}+8\delta_{\beta\delta}\delta_{\beta\zeta}\delta_{\delta\zeta})T
\label{triple2gamma}
\\
\mbox{for $\gamma=\varepsilon\neq\alpha$}\qquad  \mathbb{E}(P_{\alpha\beta}P_{\gamma\delta}P_{\varepsilon\zeta})
&=(1+2\delta_{\delta\zeta})S+(2\delta_{\beta\delta}+2\delta_{\beta\zeta}+8\delta_{\beta\delta}\delta_{\beta\zeta}\delta_{\delta\zeta})T\enspace.
\label{triple2alpha}
\end{align}
In the remaining case $\alpha\neq\gamma\neq\varepsilon$, the same reasoning yield 
\begin{equation}
\label{aNgNeNEW}
\begin{split}\mathcal{I}_{abcdef}^{\alpha\alpha\gamma\gamma\varepsilon\varepsilon}& = U
\{{\scriptstyle
\delta_{ab}\delta_{cd}\delta_{ef}
}\}
+V\{{\scriptstyle
\delta_{ac}\delta_{bd}\delta_{ef}
+\delta_{ad}\delta_{bc}\delta_{ef}
+\delta_{ae}\delta_{cd}\delta_{bf}
+\delta_{af}\delta_{cd}\delta_{be}
+\delta_{ab}\delta_{ce}\delta_{df}
+\delta_{ab}\delta_{cf}\delta_{de}
}\}\\
+W&\{{\scriptstyle 
\delta_{ac}\delta_{be}\delta_{df}
+\delta_{ac}\delta_{bf}\delta_{de}
+\delta_{ad}\delta_{ce}\delta_{bf}
+\delta_{ad}\delta_{cf}\delta_{be}
+\delta_{ae}\delta_{bc}\delta_{df}
+\delta_{ae}\delta_{bd}\delta_{cf}
+\delta_{af}\delta_{bc}\delta_{de}
+\delta_{af}\delta_{bd}\delta_{ce}
}\} 
\end{split}
\end{equation}
also valid for three preserved pairs since $U+6V+8W=N$, and finally
\begin{equation}
\label{triple3}
\mbox{for $\alpha\neq\gamma\neq\varepsilon$}\qquad \mathbb{E}(P_{\alpha\beta}P_{\gamma\delta}P_{\varepsilon\zeta})
=U+(2\delta_{\beta\delta}+2\delta_{\beta\zeta}+2\delta_{\delta\zeta})V+8\delta_{\beta\delta}\delta_{\beta\zeta}\delta_{\delta\zeta}W\enspace.
\end{equation}
To ease notations, use  definitions (\ref{NewDiracs}), multiply  both sides of (\ref{triple1}) by $\varphi$, of (\ref{triple2eps}) by $\delta_{\alpha\gamma}(1-\delta_{\alpha\varepsilon})(1-\delta_{\gamma\varepsilon})=\delta_{\alpha\gamma}-\varphi$, of (\ref{triple2gamma}) by $\delta_{\alpha\varepsilon}-\varphi$, of 
  (\ref{triple2alpha}) by  $\delta_{\gamma\varepsilon}-\varphi$,  of 
(\ref{triple3}) by $(1-\delta_{\alpha\gamma})(1-\delta_{\alpha\varepsilon})(1-\delta_{\gamma\varepsilon})=1-\sigma+2\varphi$, and add  the whole to obtain the 
unrestricted expression  (\ref{victoireYes}). 
\end{proof}

\begin{proof}[Proof of Lemma \ref{doublesum}]
\begin{align*}
\sum_{{\mathbf a}\in A_{2q}}\delta_{\sigma}({\mathbf a})\, u_{{\mathbf i}{\mathbf a}}=   &   \sum_{{\mathbf a}\in A_{2q}}\delta_{\sigma}({\mathbf a})\, u_{{\mathbf i}{\mathbf a}}
 \sum_{{\mathbf a}\in A_{2q}}\prod_{r=1}^q \delta_{a_{\sigma(2r-1)},a_{\sigma(2r)}}\, u_{{\mathbf i}{\mathbf a}}
 \\ =& \prod_{r=1}^q \: \sum_{a_{\sigma(2r-1)=1}}^{n-1}u_{a_{\sigma(2r-1)},i_{\sigma(2r-1)}}u_{a_{\sigma(2r-1)},i_{\sigma(2r)}} 
\\  =   & \prod_{r=1}^q [\delta_{i_{\sigma(2r-1)},i_{\sigma(2r)}}-\sqrt{f_{i_{\sigma(2r-1)}}}\sqrt{f_{i_{\sigma(2r)}}}]\enspace. 
\end{align*}
Summing the latter on $\sum_{{\mathbf i}\in [n]^{2q}}v_{{\mathbf i}{\bm \omega}}$ yields, by the joint orthogonality property of  section \ref{invorthin}
$$
\prod_{r=1}^q \: \sum_{i_{\sigma(2r-1)=1}}^{n}v_{i_{\sigma(2r-1)},\omega_{\sigma(2r-1)}}v_{i_{\sigma(2r-1)},\omega_{\sigma(2r)}}
=\prod_{r=1}^q  \delta_{\omega_{\sigma(2r-1)},\omega_{\sigma(2r)}}=\delta_{\sigma}({\bm \omega})\enspace. 
$$
\end{proof}

\end{appendix}




\bibliography{ArXivVersionFirstRvMomentsFB}
\bibliographystyle{apalike}

\end{document}